\newtheorem{theorem}{Theorem}[section]
\newtheorem{corollary}[theorem]{Corollary}
\newtheorem{lemma}[theorem]{Lemma}
\newtheorem{conjecture}[theorem]{Conjecture}
\newtheorem{remark}[theorem]{Remark}
\newcommand{\be}{\begin{equation}}
\newcommand{\ee}{\end{equation}}
\newcommand{\lt}{\left}
\newcommand{\rt}{\right}
\newcommand{\goto}{\rightarrow}
\newcommand{\la}{\lambda}
\newcommand{\og}{\omega}
\newcommand{\w}{\wedge}
\newcommand{\e}{\varepsilon}
\numberwithin{equation}{section}
\title{Closed minimal hypersurfaces in $\mathbb S^5$ with constant $S$ and $A_3$  }
\author{Joel Spruck}
\address{
Department of Mathematics, Johns Hopkins University, Baltimore, Maryland 21218 US.
}
\email{jspruck1@jhu.edu}
\author{Ling Xiao}
\address{
Department of Mathematics,  University of  Connecticut,  Storrs, Connecticut 06269 US.
}
\email{ling.2.xiao@uconn.edu}
\begin{document}
\maketitle

\begin{abstract}
In this paper, we prove that a closed minimally immersed hypersurface $M^4\subset\mathbb S^5$ with constant $S:=\sum\limits_{i=1}^4\lambda_i^2$ and $A_3:=\sum\limits_{i=1}^4\lambda_i^3$ whose scalar curvature
$R_M$ is nonnegative must be isoparametric. Moreover, $S$ can only be $0, 4,$ and $12.$ That is $M^4$ is either an equatorial $4$-sphere, a clifford torus, or a Cartan's minimal hypersurface.
\end{abstract}

\section{Introduction}
Let $M^n$ be a closed minimally immersed hypersurface of $\mathbb S^{n+1},$ and $h$ be its second fundamental form. We denote the square of the length of $h$ by $S.$ It is well-known that the extrinsically defined quantity $S$ is in fact intrinsic and satisfies $S=n(n-1)-R_M,$ where $R_M$ is the scalar curvature of $M^n.$

In the papers \cite{Che68} and \cite{CdK70}, S. S. Chern proposed the following famous conjecture.
\begin{conjecture}[Chern's Conjecture 1] Let $M^n\subset\mathbb S^{n+1}$ be a closed minimally immersed hypersurface in the unit sphere with constant scalar curvature $R_M$. Then for each $n$, the set of all possible values of $R_M$ is discrete.
\end{conjecture}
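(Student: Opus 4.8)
The plan is to reduce the discreteness statement to the \emph{strong} Chern conjecture: that a closed minimal hypersurface $M^n\subset\mathbb S^{n+1}$ with $S$ constant must be isoparametric, i.e.\ all principal curvatures $\la_1,\dots,\la_n$ are globally constant functions on $M$. Granting this, the conclusion is immediate from the structure theory of isoparametric hypersurfaces: by M\"{u}nzner the number $g$ of distinct principal curvatures lies in $\{1,2,3,4,6\}$, and the minimal representative of each family has $S$ pinned to the single value $(g-1)n$. Thus every admissible $S$ lies in the finite set $\{0,\,n,\,2n,\,3n,\,5n\}$, so the admissible values of $R_M=n(n-1)-S$ form a discrete (indeed finite) set. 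The whole problem is therefore transported to the implication ``$S$ constant $\Rightarrow$ $M$ isoparametric,'' and everything below is aimed at that.

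To attack it I would develop a hierarchy of Simons-type identities for the power sums $A_k:=\mathrm{tr}(h^k)=\sum_i\la_i^k$ (so $A_1=0$ by minimality and $A_2=S$). For a hypersurface the Simons operator has \emph{no cubic term} — the codimension-one computation (as in \cite{CdK70}) gives simply $\Lap h_{ij}=(n-S)h_{ij}$ pointwise — and differentiating $A_k$ twice then yields
\be
\tfrac12\Lap A_k=\tfrac k2\,(n-S)\,A_k+\tfrac k2\,Q_k,\qquad Q_k:=\sum_{l}\sum_{j=0}^{k-2}\mathrm{tr}\!\lt(h^{\,j}\,\nabla_l h\,\,h^{\,k-2-j}\,\nabla_l h\rt).
\ee
For $k=2$ this is the classical formula $\tfrac12\Lap S=(n-S)S+|\nabla h|^2$, whose integral over the closed $M$ forces $(n-S)S\le0$, giving the first gap $S\in\{0\}\cup[n,\infty)$ with $S=n$ yielding $\nabla h\equiv0$ (Clifford torus). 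The target is to show that constancy of $S$ forces \emph{every} $A_k$ to be constant: then the characteristic polynomial $\det(tI-h)$ has constant coefficients, the $\la_i$ are constant functions, and $M$ is isoparametric, closing the reduction.

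The mechanism for this propagation is to integrate combinations $\sum_k a_k\,\Lap A_k$ (together with $\Lap|\nabla h|^2$) and to play the curvature part $\sum_k a_k\tfrac k2(n-S)A_k$ against the gradient part $\sum_k a_k\tfrac k2 Q_k$. Since $A_1=0$ and $A_2=S$ are fixed, Newton's identities recast the tower in terms of $A_3,\dots,A_n$, and the Cayley--Hamilton relation $h^{n}=\sum_{j=0}^{n-1}c_j\,h^{j}$ expresses all higher $A_k$ through $A_1,\dots,A_n$, so one must analyze a \emph{finite} system and show it admits only constant solutions. In low codimension this can be carried out: for $n=3$ the tower collapses after one step (only $A_3$ is genuinely free and an integrated identity pins it down), recovering the known $n=3$ theorem; and Peng--Terng-type pinching arguments produce a genuine \emph{second} gap, pushing the next admissible value toward $2n$. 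The natural plan is to iterate — assume $S$ constant, prove $A_3$ constant, then $A_4$, and so on — each step using the previously established constancies to render the relevant integrated identity sign-definite.

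The hard part, and the reason the conjecture is open for general $n$, is exactly that this system does \emph{not} close. Already at $k=3$ the gradient form $Q_3$ is \emph{sign-indefinite} (it carries the cubic factor $A_3=\sum\la_i^3$, which has no fixed sign), so the Bochner-type vanishing that succeeds for $\tfrac12\Lap S$ has no automatic analogue: one cannot simply read off $\nabla A_k\equiv0$ from an integral identity. Controlling the indefinite cubic term $A_3$ is the first genuine obstruction, and it is precisely here that auxiliary hypotheses become natural — assuming $A_3$ itself constant, or a curvature sign such as $R_M\ge0$, as is done in the $n=4$ analysis of the present paper. A complete proof therefore hinges on a new algebraic device that keeps the indefinite cubic (and higher odd) terms under control \emph{uniformly in $n$}; this is the missing ingredient that would upgrade the known gap-by-gap results into the full discreteness asserted by the conjecture.
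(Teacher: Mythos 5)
You have not proved the statement, and no one has: it is Chern's Conjecture~1, which the paper records as open and does not prove -- the paper establishes only the $n=4$ case under the additional hypotheses that $A_3$ is constant and $R_M\ge 0$. Your outer reduction is correct as far as it goes: granting Conjecture~2 (constant $S$ implies isoparametric), M\"unzner's restriction $g\in\{1,2,3,4,6\}$ \cite{Mun80,Mun81} together with the fact that a closed \emph{minimal} isoparametric hypersurface has $S=(g-1)n$ pins $S$ to the finite set $\{0,n,2n,3n,5n\}$, hence $R_M=n(n-1)-S$ to a finite set (for $n=4$ this is exactly the paper's list $S\in\{0,4,12\}$, since $g=3,6$ do not occur in that dimension); and your remark that constancy of all power sums $A_k$ forces the ordered principal curvatures to be constant on a connected $M$ is sound. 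But everything then rests on the implication ``$S$ constant $\Rightarrow$ isoparametric,'' and your proposal does not establish it -- by your own admission the tower of Simons-type identities ``does not close.'' That is the genuine gap, and it is the entire conjecture: the identity $\tfrac12\Lap A_k=\tfrac k2(n-S)A_k+\tfrac k2 Q_k$ gives a sign-definite conclusion only at $k=2$, and there only because $S$ is constant so the \emph{pointwise} relation $(n-S)S=-|\nabla h|^2$ holds (your phrasing ``integrating forces $(n-S)S\le 0$'' elides this); already $Q_3=2\sum_l\mathrm{tr}\bigl(h\,(\nabla_l h)^2\bigr)$ is indefinite, and no integrated combination of the finitely many identities produced via Newton's identities and Cayley--Hamilton is known to be sign-definite. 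Identifying the obstruction is not the same as overcoming it.

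Two further corrections of substance. First, your claim that for $n=3$ ``an integrated identity pins down $A_3$'' and recovers the known theorem is inaccurate: Peng--Terng \cite{PT83} obtained only a gap ($S=3$ or $S\ge 6$), de Almeida--Brito \cite{AB90} needed a test-function/Stokes argument with a differential form -- the very mechanism the present paper generalizes via the $3$-form $\Phi$, the regularized minimum $K$ built from Lemma \ref{lem-pre-1}, and the negativity of the coefficients $L_i$ in \eqref{form-derivative} -- and Chang \cite{Chang93} required a separate analysis to complete the classification; none of these is a one-step Bochner vanishing. Second, the present paper's strategy is instructive precisely because it abandons the pure integral-identity route at the point where your program stalls: rather than trying to make the indefinite cubic terms cooperate algebraically, it \emph{assumes} $A_3$ constant (and $R_M\ge 0$), uses the three constraints $A_1=0$, $A_2=S$, $A_3$ constant to express all $h_{jji}$ through $h_{44i}$ as in \eqref{tf1}, and extracts signs geometrically from $d\Phi$. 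So your proposal is a reasonable map of the difficulty, but as a proof it is missing its central step, and the auxiliary hypotheses you would need to discharge are exactly the ones the paper keeps.
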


The only known examples of closed minimal hypersurfaces $M^n\subset\mathbb S^{n+1}$ with constant scalar curvature are all isoparametric, that is have constant principal curvatures. The study of isoparametric hypersurfaces in space forms was initiated by Cartan in a remarkable series of papers \cite{Car38, Car39.1, Car39.2, Car40}. After a long hiatus, M\"unzner's fundamental papers \cite{Mun80, Mun81} became the basis of  the modern study of isoparametric hypersurfaces in spheres and their  classification is now complete (see \cite{Chi18}) but is extremely complicated. The focus of  research on the Chern conjectures is basically concentrated on the revised conjecture

\begin{conjecture}[Chern's Conjecture 2] Let $M^n\subset\mathbb S^{n+1}$ be a closed minimally immersed hypersurface in the unit sphere with constant scalar curvature $R_M$. Then $M^n$ is isoparametric.
\end{conjecture}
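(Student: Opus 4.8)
The plan is to run the moving-frame and Simons-identity machinery that underlies all known progress, organized around the power sums $f_k:=\sum_{i=1}^n\la_i^k$ of the principal curvatures. I would first fix a local orthonormal frame diagonalizing the second fundamental form $A=(h_{ij})$, so that minimality reads $f_1=0$ and the hypothesis is $f_2=S=\mathrm{const}$. Writing $\sigma_k$ for the elementary symmetric functions and using Newton's identities together with $\sigma_1=f_1=0$, the hypothesis is precisely that $\sigma_2$ is constant, while $M$ is isoparametric if and only if every $\sigma_k$ (equivalently every $f_k$) is constant. Simons' identity for a minimal hypersurface in $\mathbb S^{n+1}$,
\[
\tfrac12\Delta S=|\nabla A|^2+S(n-S),
\]
combined with $\Delta S=0$, gives the pointwise identity $|\nabla A|^2=S(S-n)$. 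When $S\le n$ the right side is $\le 0$, forcing $\nabla A\equiv 0$ and $S\in\{0,n\}$; this reproves the Chern--do Carmo--Kobayashi pinching and disposes of the bottom of the spectrum. The entire difficulty lives in the range $S>n$, where $|\nabla A|^2$ is merely a positive constant and no pinching is available.

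For $S>n$ the central tool is the Cheng--Yau self-adjoint operator $\square f=\sum_{i,j}(nH\delta_{ij}-h_{ij})\nabla_i\nabla_j f$, which for a minimal hypersurface reduces to $\square f=-\sum_{i,j}h_{ij}\nabla_i\nabla_j f$ and, by the Codazzi equations, is a pure divergence; hence $\int_M\square f\,dV=0$ for every $f$ since $M$ is closed. The plan is to apply $\square$ (and $\Delta$) to the successive power sums $f_3,f_4,\dots$ and to products such as $S\,f_2$, producing a hierarchy of integral identities. Using Simons' identity once more to differentiate $A$, each such identity couples controlled gradient integrals to polynomial integrands in the $\la_i$, i.e. to the higher $f_k$. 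The strategy is to combine these identities with the algebraic relations among the $f_k$ (Newton's identities, Maclaurin's and Cauchy--Schwarz inequalities) so as to force $\sigma_3,\sigma_4,\dots,\sigma_n$ all to be constant; once this is achieved the characteristic polynomial of $A$ has constant coefficients, the multiset of principal curvatures is locally constant, and $M$ is isoparametric, which is the desired conclusion.

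The decisive difficulty is closing this hierarchy, and it is exactly here that the conjecture remains open. The identity coming from $\square f_3$ already contains the cubic contraction $\sum_{i,j,k}h_{ij}h_{jk}h_{ki}$ together with gradient cross-terms of the form $\sum_i\la_i(\nabla A)^2$ that are \emph{not} sign-definite, so one application does not by itself pin down $\sigma_3$; it must be fed back into the identities from $\square f_4$ and $\square(S f_2)$, which introduce still higher symmetric functions. For $n=3$ there are only three principal curvatures: Newton's identities express every $f_k$ through $f_2$ and $f_3$ alone, the hierarchy collapses to a single divergence identity, and one shows $\sigma_3$ constant (the de Almeida--Brito theorem). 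The extra hypothesis ``$A_3=f_3$ constant'' in the $n=4$ theorem of this paper manufactures precisely this collapse by hand, leaving only $\sigma_4$ to be controlled. For general $n$ the number of independent symmetric functions grows with $n$, the system no longer closes, and the available algebraic inequalities are too weak to compensate. I therefore expect the genuine obstacle to be producing enough \emph{independent, sign-controlled} integral identities to match the unknown symmetric functions --- equivalently, finding the correct higher-order refinement of the Simons inequality that stays sharp both for $S>n$ and for an arbitrary number of distinct principal curvatures.
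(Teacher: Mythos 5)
The statement you were asked to address is Conjecture 2 of the paper, and the paper does not prove it --- nobody has: it is stated as an open conjecture, and the body of the paper establishes only the special case $n=4$ under two additional hypotheses, namely that $A_3=\sum_i\lambda_i^3$ is constant and $R_M\geq 0$ (Theorem \ref{thm1}). You recognized this correctly: your proposal is a program, not a proof, and you concede at the end that your hierarchy of integral identities does not close for general $n$. That concession is the genuine gap, and it is the entire content of the open problem, so there is no step-by-step comparison to be made against a proof in the paper. What you assert along the way is sound: with $S$ constant, Simons' identity does give $|\nabla A|^2=S(S-n)$ pointwise, which rules out $0<S<n$ and forces $\nabla A\equiv 0$ when $S=n$; the Cheng--Yau operator is divergence form on a closed minimal hypersurface; and isoparametricity is equivalent to constancy of all the power sums $f_k$. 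There is no false step to flag, only the honest absence of a closing argument.

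One correction of substance, though, concerning where the known partial results actually come from, including the one this paper proves: the method is not the $\square f_k$ hierarchy you outline. The de Almeida--Brito theorem for $n=3$, and this paper's $n=4$ theorem following it, proceed by integrating an explicit differential form rather than by applying $\square$ to power sums. Here the authors build the $3$-form $\Phi=\sum_{i<j}\theta_{ij}$ from the connection forms on the open set $Y$ where the four principal curvatures are simple, compute $d\Phi=\bigl(\sum_{i=1}^4L_ih_{44i}^2-\tfrac12R_M\bigr)\mathrm{vol}$ with every $L_i<0$, and apply Stokes' theorem to $(\eta_\varepsilon\circ K)\Phi$, where $K$ is a test function manufactured from $f=(\lambda_3-\lambda_2)^2$ and $g=(\lambda_2-\lambda_1)^2$ via Guan's smoothing lemma (Lemma \ref{lem-pre-1}) so as to be smooth, nonnegative, and supported away from curvature collisions. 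The constancy of $S$ and $A_3$ enters not by ``leaving only $\sigma_4$ to be controlled'' but through the three linear relations $\sum_jh_{jji}=\sum_j\lambda_jh_{jji}=\sum_j\lambda_j^2h_{jji}=0$, which by Cramer's rule express every $h_{jji}$ as an explicit multiple of $h_{44i}$ (equation \eqref{tf1}); this is exactly what renders both $d\Phi$ and $dK\wedge\Phi$ quadratic forms in the four quantities $h_{44i}$ with controllable signs, while $R_M\geq 0$ supplies the sign of the remaining term. Your historical gloss on de Almeida--Brito as a collapsed divergence-identity hierarchy is therefore inaccurate --- their proof is the form-integration argument this paper adapts. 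Consequently, in this framework the bottleneck for the full conjecture is also different from the one you name: as the authors themselves point out in the introduction, it is producing, for general $n$ and without constancy of the intermediate $A_k$, a form $\Phi$ whose exterior derivative has a sign together with a test function $K$ whose differential pairs with $\Phi$ with the right sign near the degenerate set --- two requirements that become hard to satisfy simultaneously once $n\geq 4$.
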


There have been many works on the Chern conjectures. In particular, Simons \cite{Sim68} proved that if $0\leq S\leq n,$ then either $S\equiv 0$ or $S\equiv n.$ By the result of Chern-do Carmo-Kobayashi \cite{CdK70} and Lawson \cite{Law69} we know that the only closed minimal hypersurfaces in $\mathbb S^{n+1}$ with $S=n$ are the Clifford tori. When $n=3,$ that is for a closed minimal hypersurface $M^3$ immersed in $\mathbb S^4$ with constant scalar curvature, Peng-Terng \cite{PT83} showed that if $S\geq 3$ then either $S=3$ or $S\geq 6.$ Later  de Almeida-Brito \cite{AB90} proved that this same class of hypersurfaces are isoparametric if $S\leq 6$. In 1993, Chang \cite{Chang93} settled the complementary case and completed the classification theorem for closed minimal hypersurfaces $M^3\subset\mathbb S^4$ with constant scalar curvature.

For the higher dimensional case, much less is known. In 2005, Lusala-Scherfner-Sousa \cite{LSS05} generalized the theorem of \cite{AB90} and proved that a closed minimal Willmore hypersurface \vspace{-.1in} (i.e., $A_3:=\sum\limits_i\la_i^3\equiv 0$) $M^4\subset \mathbb S^5$ with nonnegative constant scalar curvature must be isoparametric, that is the principal curvatures are constant. Building on this result, Deng-Gu-Wei \cite{DGW17} obtained the classification theorem for minimal Willmore hypersurfaces in $\mathbb S^5$ with constant scalar curvature. The theorem of \cite{AB90} was further generalized by Tang-Wei-Yan \cite{TWY20} who proved that if a closed minimal hypersurface $M^n\subset \mathbb S^{n+1}$ has constant $A_k:=\sum\limits_i\la_i^k, 2\leq k\leq n-1$ curvatures, nonnegative scalar curvature, and distinct principal curvatures everywhere, then $M^n$ is isoparametric.

In order to generalize the theorem of \cite{AB90}, both papers \cite{LSS05} and \cite{TWY20} need some additional assumptions on the curvatures of $M^n.$ More specifically, in \cite{LSS05} the authors assumed $A_3=0$ instead of a general constant, while in \cite{TWY20} the authors assumed the principal curvatures are distinct on the entire $M^n$. The key reasons for these assumptions is that when adapting the argument of \cite{AB90}, one needs a smooth test function on $M^n$ (see \cite[Lemma 1]{AB90}) and the derivative of the test function needs to have the right sign (see \cite[page 203, inequalities below equation (6.2)]{AB90}). For $n\geq 4$ it is difficult to find a test function satisfying both these  conditions.

In this paper, we introduce a new way to construct  this test function.  Moreover, we modify the argument in \cite{AB90} and no longer need to use \cite[Lemma 1]{AB90}. These innovations enable us to generalize the result in \cite{LSS05} to the case $A_3=\text{constant}$ without assuming the principal curvatures are distinct. We prove the following theorems.
\begin{theorem}\label{thm1}
A closed minimally immersed hypersurface $M^4\subset\mathbb S^5$ with constant $S:=\sum\limits_{i=1}^4\lambda_i^2$ and $A_3:=\sum\limits_{i=1}^4\lambda_i^3$ whose scalar curvature $R_M$ is nonnegative must be isoparametric. Moreover, $S$ can only be $0, 4,$ and $12.$ That is $M^4$ is either an equatorial $4$-sphere, a clifford torus, or  Cartan's minimal hypersurface.
\end{theorem}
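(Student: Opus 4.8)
The plan is to follow the strategy pioneered by de Almeida–Brito and adapted by Lusala–Scherfner–Sousa, while replacing the two ingredients (the test function of \cite[Lemma 1]{AB90} and the sign of its derivative) that forced the restrictive hypotheses $A_3=0$ or ``principal curvatures distinct everywhere.'' First I would set up the standard machinery: since $M^4\subset\mathbb S^5$ is minimal, $\sum_i\lambda_i=0$, and the hypotheses say $S=\sum_i\lambda_i^2$ and $A_3=\sum_i\lambda_i^3$ are constants. I would then bring in the Gauss equation $S=n(n-1)-R_M$ together with $R_M\ge 0$, which bounds $S\le 12$ for $n=4$. The goal is to show the symmetric functions of the $\lambda_i$ are all forced to be constant, so that $M^4$ is isoparametric; by Münzner's theory (and the known classification for $\mathbb S^5$) the admissible values of $S$ are then exactly those realized by isoparametric examples, giving $S\in\{0,4,12\}$.

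Second, the analytic heart is an integral (divergence-type) identity obtained by computing $\Delta$ of the relevant symmetric polynomials in the principal curvatures and integrating over the closed manifold $M^4$. Using Simons' equation for $\Delta h$ and the Codazzi equations, the Laplacians of $S$ and $A_3$ produce expressions in $\sum h_{ijk}^2$ (the squared covariant derivative of the second fundamental form) and in higher symmetric functions like $A_4=\sum_i\lambda_i^4$. Because $S$ and $A_3$ are constant, $\Delta S=\Delta A_3=0$, and integrating by parts kills the divergence terms; what survives is a pointwise algebraic relation among the $\lambda_i$ and a nonnegative ``gradient'' quantity $\sum h_{ijk}^2$. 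The strategy of \cite{AB90} is to pair this with a cleverly weighted integral so that, after applying the divergence theorem on the closed $M^4$, one obtains an integrated inequality in which every term has a definite sign; forcing it to vanish then yields $\sum h_{ijk}^2\equiv 0$, i.e.\ $\nabla h\equiv 0$, hence constant principal curvatures.

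Third, the genuinely new step — and the one I expect to be the main obstacle — is constructing the right \emph{test function} when the principal curvatures are allowed to coincide. The paper signals that its innovation is ``a new way to construct this test function'' together with a modification of \cite{AB90} that avoids \cite[Lemma 1]{AB90} entirely. Concretely, one wants a smooth function built from the symmetric data (for instance a suitable polynomial combination of $S$, $A_3$, $A_4$, and $f_4:=\sum_i\lambda_i^4$ type quantities) whose Laplacian, when integrated against the constant-$S$, constant-$A_3$ constraints, has a controlled sign. The difficulty is that natural choices like $A_4$ or the discriminant-type functions measuring coincidence of curvatures fail to be smooth exactly where two $\lambda_i$ collide, which is precisely why \cite{TWY20} had to assume distinct principal curvatures. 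The plan is therefore to work with symmetric functions of the $\lambda_i$ (which remain smooth across coincidence loci since they are polynomials in the entries of $h$) and to exploit the two constant constraints to express the problematic higher-order terms back in terms of $S$, $A_3$ and the gradient of $h$, so that the final integral identity becomes sign-definite without ever differentiating an individual (possibly non-smooth) eigenvalue.

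Finally, once $\nabla h\equiv 0$ is established, $M^4$ is isoparametric, and I would close the argument by invoking the classification of isoparametric minimal hypersurfaces in $\mathbb S^5$: the possible numbers of distinct principal curvatures are restricted by Münzner's theorem, and minimality together with $S\le 12$ pins down the three cases. Computing $S$ for each — the totally geodesic equatorial $4$-sphere ($S=0$), the Clifford torus $S^1\times S^3$ or $S^2\times S^2$ type product ($S=4$), and Cartan's isoparametric hypersurface with three distinct principal curvatures ($S=12$) — yields exactly the stated values $0,4,12$ and identifies $M^4$ accordingly.
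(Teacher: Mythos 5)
Your outline correctly identifies the general de Almeida--Brito framework and the crux of the problem (a test function that survives coincidence of principal curvatures), but as written it has a genuine gap: the one step you flag as ``the genuinely new step'' is never carried out, and the concrete direction you propose for it is not the one that works. You suggest building the test function out of \emph{symmetric} polynomials in the $\lambda_i$ (combinations of $S$, $A_3$, $A_4$, etc.), precisely because these are smooth across coincidence loci. But the whole point of the test function in this argument is to detect \emph{which} pair of eigenvalues is colliding and to localize the Stokes/divergence argument near that set; a symmetric function cannot distinguish $\lambda_2=\lambda_1$ from $\lambda_3=\lambda_2$, and it is exactly this distinction that makes the boundary terms sign-controlled. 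The paper instead takes $f=(\lambda_3-\lambda_2)^2$, $g=(\lambda_2-\lambda_1)^2$ and uses the smoothing lemma of \cite{Guan02} to build $K=\tfrac{f+g}{2}-\tfrac{h(f-g)}{2}$, a regularization of $\min(f,g)$; this is well defined and positive off the degenerate set only because of two reductions you do not make: (i) Theorem \ref{thm2} (the two-distinct-curvature case, handled separately via Okumura's inequality \cite{Ok} and the Willmore classification) rules out $f=g=0$, so $f+g$ is bounded below; and (ii) the normalization $A_3\ge 0$ rules out $\lambda_4=\lambda_3$, so only the two gaps $f,g$ can degenerate. Without these, ``$Y=\{K>0\}$'' need not carry the whole geometry and the localization collapses.

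The second, related gap is the analytic identity itself. You describe it as integrating $\Delta$ of symmetric polynomials via Simons' equation, which is the Peng--Terng mechanism \cite{PT83} and by itself only yields pinching/gap results (indeed the paper uses $\sum h_{ijk}^2=S(S-4)$ only as an a priori bound). The actual engine is the exact $3$-form $\Phi=\sum_{i<j}\theta_{ij}$ built from the connection forms of the principal frame, for which one computes $d\Phi=(\sum_i L_i h_{44i}^2-\tfrac12 R_M)\,\mathrm{vol}$ with all $L_i<0$, together with the estimate $dK\wedge\Phi\le C(\sum_i h_{44i}^2)\,\mathrm{vol}$ on the sets where $f$ or $g$ is small; applying Stokes to $(\eta_\varepsilon\circ K)\Phi$ and letting $\varepsilon\to 0$ forces $R_M\equiv 0$ and $h_{44i}\equiv 0$. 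None of the sign computations ($L_i<0$, the coefficients $m_0>0$, $m_1<0$, $B^g_1,B^g_2<0$) appear in your sketch, and they are where the proof actually lives. Finally, note that the paper does not need the isoparametric classification to pin down $S$: in the main case $S=12$ falls out directly from $R_M\equiv 0$, while $S=0,4$ come from Simons' theorem and Theorem \ref{thm2}.
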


When there exists a point $p\in M^4$ such that at $p$ there are exactly two distinct principal curvatures, we obtain a stronger result without any sign restriction on $R_M$.
\begin{theorem}\label{thm2}
Let $M^4\subset\mathbb S^5$ be an immersed closed minimal hypersurface with constant $S$ and $A_3.$ If there exists a point $p\in M^4$ such that there are exactly two distinct principal curvatures at $p$, then $S=4,$ $A_3=0,$ and $M^4$ is a Clifford torus.
\end{theorem}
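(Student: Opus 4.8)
The plan is to prove Theorem 2, which isolates the case where some point $p \in M^4$ carries exactly two distinct principal curvatures. Since $M^4$ is minimal, $\sum \lambda_i = 0$. With only two distinct values among four curvatures, the multiplicities can be $(1,3)$, $(3,1)$, or $(2,2)$. The minimality constraint will force the actual numerical values. For a $(2,2)$ split with values $\mu$ and $\nu$ of multiplicity two each, $2\mu + 2\nu = 0$ gives $\nu = -\mu$, so at $p$ we have principal curvatures $\mu, \mu, -\mu, -\mu$; this yields $S = 4\mu^2$ and, crucially, $A_3 = 2\mu^3 - 2\mu^3 = 0$. For a $(1,3)$ split with a simple curvature $a$ and a triple curvature $b$, minimality gives $a + 3b = 0$, so $a = -3b$, and then $A_3 = a^3 + 3b^3 = -27b^3 + 3b^3 = -24b^3$, while $S = 9b^2 + 3b^2 = 12b^2$.

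First I would use the hypothesis that $S$ and $A_3$ are \emph{constant} on all of $M^4$ to propagate the local information at $p$ globally. The key observation is that the elementary symmetric data $S = \sum \lambda_i^2$ and $A_3 = \sum \lambda_i^3$ (together with $A_1 = \sum \lambda_i = 0$ from minimality) are constant, so the values of these three power sums are pinned down by their values at $p$. I would then analyze the algebraic variety of possible principal-curvature $4$-tuples $(\lambda_1, \ldots, \lambda_4)$ consistent with these three fixed power sums. The decisive step is to rule out the $(1,3)$-type configuration in favor of the $(2,2)$-type: I expect that the combination of nonnegativity-type reasoning or the specific structure forced by having exactly two distinct curvatures at a point, propagated via constancy, leaves only the Clifford torus possibility $S = 4$, $A_3 = 0$.

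The cleanest route is to show that once we know $A_3 = 0$ holds at $p$ (which the $(2,2)$ case gives automatically) and $A_3$ is constant, then $A_3 \equiv 0$ globally, placing $M^4$ in the minimal Willmore class; then the cited result of Lusala--Scherfner--Sousa, combined with Deng--Gu--Wei's classification of minimal Willmore hypersurfaces in $\mathbb{S}^5$ with constant scalar curvature, would identify $M^4$ as isoparametric and single out the Clifford torus with $S = 4$. Thus the heart of the argument is to exclude the $(1,3)$ split, or equivalently to show that the two-distinct-curvature condition at a point forces the symmetric $(2,2)$ configuration and hence $A_3 = 0$.

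The main obstacle I anticipate is ruling out the $(1,3)$ configuration cleanly. A priori the point $p$ with two distinct principal curvatures could have the simple-triple structure $(-3b, b, b, b)$, giving nonzero $A_3 = -24b^3$ and $S = 12b^2$, which would \emph{not} reduce to the Willmore case. To exclude this, I expect one must exploit more than pointwise algebra: one needs the global constancy of $S$ and $A_3$ together with the structure equations (Codazzi and the minimality Laplacian identities of Simons type) to derive a contradiction, perhaps by showing that a $(1,3)$ point cannot coexist with globally constant $S, A_3$ on a \emph{closed} $M^4$ unless the hypersurface is the isoparametric Cartan example — which however has \emph{three} distinct principal curvatures everywhere, contradicting the existence of a two-distinct-curvature point. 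That last remark suggests the cleanest exclusion: the only isoparametric $M^4 \subset \mathbb{S}^5$ with constant $S = 12$ is Cartan's hypersurface, whose principal curvatures are everywhere three-distinct, so if the classification machinery of Theorem~\ref{thm1} applies, a point with exactly two distinct curvatures is incompatible with $S = 12$, leaving only $S = 4$, $A_3 = 0$, the Clifford torus.
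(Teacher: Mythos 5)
Your reduction to the multiplicity types $(2,2)$ and $(1,3)$, and your treatment of the $(2,2)$ case, essentially match the paper: at a $(2,2)$ point $A_3=0$, constancy gives $A_3\equiv 0$, and the paper then just invokes \cite[Theorem 3.1]{DGW17}. (You should drop the appeal to \cite{LSS05}, which assumes $R_M\geq 0$; Theorem \ref{thm2} carries no sign restriction, and \cite{DGW17} alone is what is needed.) The genuine gap is the $(1,3)$ case, which you explicitly leave unresolved and for which your proposed exclusions do not work. First, the configuration $(-3b,b,b,b)$ gives $S=12b^2$, not $S=12$: the parameter $b$ is not pinned down, and $b=1/\sqrt3$ gives $S=4$, which is realized by the minimal Clifford torus $S^1(1/2)\times S^3(\sqrt3/2)$ --- a closed minimal hypersurface with constant $S=4$, constant $A_3=\pm 8/\sqrt3\neq 0$, and exactly two distinct principal curvatures everywhere. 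So the dangerous competitor in the $(1,3)$ case is not Cartan's hypersurface, and an argument of the form ``$S=12$ forces the Cartan example, which has three distinct curvatures'' never gets off the ground. Second, appealing to ``the classification machinery of Theorem \ref{thm1}'' is circular: in the paper Theorem \ref{thm1} is deduced from Theorem \ref{thm2}, and Theorem \ref{thm1} moreover assumes $R_M\geq 0$.

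The idea you are missing is Okumura's inequality (Lemma \ref{lem-p2-1}): for a trace-free $4$-tuple, $|A_3|\leq \frac{1}{\sqrt3}S^{3/2}$ with equality exactly when three of the $\la_i$ coincide. A $(1,3)$ point is an equality point, and since $S$ and $A_3$ are both constant, equality --- hence the $(1,3)$ structure with fixed numerical values --- propagates to all of $M^4$, so $M^4$ is isoparametric. Lemma \ref{lem-p2-2} then yields $h_{ijk}\equiv 0$, and Simons' identity $0=\Delta S=2S(4-S)+2\sum h_{ijk}^2$ forces $S=0$ or $S=4$; this is how the paper disposes of the $(1,3)$ case. (Even that last step deserves scrutiny, since $S=4$ in the $(1,3)$ configuration is precisely the $S^1\times S^3$ Clifford torus; in the application to Theorem \ref{thm3}, where $A_3<\frac1{\sqrt3}S^{3/2}$ is assumed, Okumura's inequality excludes the $(1,3)$ case outright.) In any event, as written your proposal contains no proof of the $(1,3)$ exclusion, which is the crux of the theorem.
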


Theorem \ref{thm2} is proved in Section \ref{p2} and Theorem \ref{thm1} is proved in Section \ref{p1}. We also note that in this paper we will always arrange $\la_i$ such that $\la_4\geq\la_3\geq\la_2\geq\la_1.$ For convenience of exposition we will  assume that $A_3\geq 0$  ;  otherwise
replace $\la_i $ by $\tilde{\la}_i=-\la_i$.

\section{Proof of Theorem \ref{thm2}}
\label{p2}
In order to prove Theorem \ref{thm2} we will need the following two Lemmas.

\begin{lemma}\cite[Proposition 2.1]{Ok}
\label{lem-p2-1}
Let $a_1, \cdots, a_n$ be real numbers satisfying $\sum_{i=1}^n a_i=0$. Then
\[\lt|\sum_{i=1}^n a_i^3\rt|\leq\frac{n-2}{\sqrt{n(n-1)}}(\sum_{i=1}^n a_i^2)^{3/2}\]
with equality  if and only if   $n-1$ of  the $a_i$'s  are equal.
\end{lemma}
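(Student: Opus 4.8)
The plan is to treat this as a constrained optimization problem and solve it via Lagrange multipliers. Since both sides are homogeneous of degree $3$ in $(a_1,\dots,a_n)$, I would first normalize by imposing $\sum_{i=1}^n a_i^2 = 1$, so that it suffices to prove $\sum a_i^3 \le \frac{n-2}{\sqrt{n(n-1)}}$ (the matching lower bound then follows by replacing each $a_i$ with $-a_i$). The constraint set $K=\{a\in\mathbb{R}^n : \sum a_i = 0,\ \sum a_i^2 = 1\}$ is a round sphere of dimension $n-2$, hence compact, so the continuous function $f(a)=\sum a_i^3$ attains its maximum on $K$ at some critical point, and the whole problem reduces to identifying that maximizer.

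At a critical point, Lagrange multipliers produce constants $\mu,\lambda$ with $3a_i^2 = 2\mu a_i + \lambda$ for every $i$. Thus each $a_i$ is a root of the single quadratic $3t^2 - 2\mu t - \lambda = 0$, so the $a_i$ take at most two distinct values. Say $p$ of them equal $\alpha$ and $q=n-p$ of them equal $\beta$ with $\alpha\ne\beta$ (a single common value would force all $a_i=0$, incompatible with $\sum a_i^2=1$). The constraints $p\alpha+q\beta=0$ and $p\alpha^2+q\beta^2=1$ then pin down $\alpha,\beta$ up to sign; eliminating $\beta=-p\alpha/q$ gives $\alpha^2 = q/(np)$, and a short computation yields
\[
\Big(\sum_{i=1}^n a_i^3\Big)^2 = \big(p\alpha^3+q\beta^3\big)^2 = \frac{(q-p)^2}{npq} = \frac{(n-2p)^2}{n\,p(n-p)}.
\]

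The last step is to maximize the right-hand side over $p\in\{1,\dots,n-1\}$. Writing $u=p(n-p)$ and using the identity $(n-2p)^2 = n^2 - 4p(n-p) = n^2 - 4u$, the expression becomes $\frac1n\big(\tfrac{n^2}{u}-4\big)$, which is a strictly decreasing function of $u$. Hence it is largest exactly when $u=p(n-p)$ is smallest, namely at $p=1$ or $p=n-1$, where $u=n-1$ and the value equals $\frac{(n-2)^2}{n(n-1)}$. This delivers the claimed bound, and equality holds precisely when $\{p,q\}=\{1,n-1\}$, i.e. when $n-1$ of the $a_i$ coincide. I expect the one point genuinely requiring care to be the reduction itself: verifying compactness so that the maximum is realized at an honest Lagrange critical point (there being no boundary to worry about), after which the two-value reduction and the substitution $u=p(n-p)$ render the optimization completely elementary.
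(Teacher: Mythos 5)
Your argument is correct, and the computations check out: the Lagrange condition $3a_i^2=2\mu a_i+\lambda$ does force the $a_i$ to take at most two values, the elimination gives $\alpha^2=q/(np)$ and $\bigl(\sum a_i^3\bigr)^2=(n-2p)^2/\bigl(np(n-p)\bigr)$, and the substitution $u=p(n-p)$ correctly identifies $p\in\{1,n-1\}$ as the extremal configurations with value $(n-2)^2/\bigl(n(n-1)\bigr)$. Note, however, that the paper offers no proof to compare against: Lemma \ref{lem-p2-1} is quoted verbatim from Okumura \cite{Ok}, whose original argument is purely algebraic (it exploits the pointwise bound $a_i^2\leq\frac{n-1}{n}\sum_j a_j^2$, which follows from Cauchy--Schwarz applied to $\sum_{j\neq i}a_j=-a_i$, rather than any variational principle). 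Your route buys a transparent identification of the equality case for free, at the cost of invoking compactness and smooth optimization. Two small points you should make explicit to be airtight: (i) the constraint qualification --- the gradients $2a$ and $(1,\dots,1)$ are linearly independent on $K$, since $a$ proportional to $(1,\dots,1)$ together with $\sum a_i=0$ forces $a=0$, which is excluded by $\sum a_i^2=1$; and (ii) the degenerate case $a=0$ of the unnormalized statement, where both sides vanish and all $n$ (hence $n-1$) of the $a_i$ coincide, so the equality characterization is still consistent.
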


\begin{lemma}\cite[Lemma 2.2]{Ok}
\label{lem-p2-2}
Suppose that $\varphi$ is a smooth function such that $\la_1\geq\varphi$ everywhere and
$\la_1=\varphi$ at $p$. Let $\mu$ denote the multiplicity of the smallest curvature eigenvalue at $p$, so
that $\la_1=\cdots=\la_\mu<\la_{\mu+1}\leq\cdots\leq\la_n.$ Then at $p$, $\nabla_ih_{kl}=\nabla_i\varphi\delta_{kl}$ for $1\leq k, l\leq\mu.$
\end{lemma}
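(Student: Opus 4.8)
The plan is to combine the variational (Rayleigh) characterization of the smallest principal curvature with the hypothesis that $\varphi$ is a lower barrier for it, thereby reducing the statement to the first-order condition at a local minimum of a genuinely smooth function. The delicate point, which I expect to be the main obstacle, is that $\la_1$ itself is only Lipschitz (not $C^1$) at a point where eigenvalues collide, so one cannot differentiate $\la_1$ directly. This is circumvented by comparing $\varphi$ not with $\la_1$ but with the smooth functions $q\mapsto h_q(V(q),V(q))$ attached to suitable unit vector fields $V$, which sandwich between $\varphi$ and $\la_1$.

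First I would fix an orthonormal frame $\{e_1,\dots,e_n\}$ at $p$ diagonalizing $h$, so that $h_{ab}=\la_a\delta_{ab}$ at $p$ and $W:=\mathrm{span}\{e_1,\dots,e_\mu\}$ is exactly the eigenspace of the smallest eigenvalue $\la_1=\varphi(p)$ (the strict gap $\la_\mu<\la_{\mu+1}$ guarantees $\dim W=\mu$). I then extend the frame to a neighborhood of $p$ by parallel transport along radial geodesics, so that the extended fields satisfy $\nabla e_a|_p=0$; since parallel transport is an isometry, each extended field remains of unit length.

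Next, for an arbitrary fixed unit vector $v\in W$, let $V$ be the unit vector field obtained by parallel-transporting $v$, so that $|V|\equiv 1$ and $\nabla V|_p=0$. By the Rayleigh quotient, $\la_1(q)=\min_{|w|=1}h_q(w,w)\le h_q(V(q),V(q))$, while $\la_1(q)\ge\varphi(q)$ by hypothesis; hence the smooth function
\[
g_v(q):=h_q(V(q),V(q))-\varphi(q)
\]
satisfies $g_v\ge 0$ everywhere with $g_v(p)=h_p(v,v)-\varphi(p)=\la_1-\varphi(p)=0$. Thus $p$ is a local minimum of $g_v$, so $\nabla_i g_v|_p=0$ for every $i$. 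Using $\nabla V|_p=0$ to differentiate $h_{ab}V^aV^b$, this reads, at $p$,
\[
\sum_{a,b}(\nabla_i h_{ab})\,v^a v^b=\nabla_i\varphi ,
\]
for every index $i$ and every unit vector $v\in W$.

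Finally I would upgrade this pointwise-in-$v$ identity to the matrix statement. Fixing $i$, the quadratic form $Q_i(v):=\sum_{a,b}(\nabla_i h_{ab})\,v^a v^b$ is constant (equal to $\nabla_i\varphi$) on the unit sphere of $W$, hence by degree-two homogeneity $Q_i(v)=(\nabla_i\varphi)\,|v|^2$ for all $v\in W$. A symmetric bilinear form that is a constant multiple of $|\cdot|^2$ on $W$ must equal that multiple of the identity on $W$, so $(\nabla_i h_{kl})_{1\le k,l\le\mu}=(\nabla_i\varphi)\,\delta_{kl}$, which is precisely the asserted identity $\nabla_i h_{kl}=\nabla_i\varphi\,\delta_{kl}$ for $1\le k,l\le\mu$. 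The only genuine care needed is the smoothness/barrier step flagged at the outset; the remaining manipulations are just the first-variation identity at a minimum together with elementary linear algebra.
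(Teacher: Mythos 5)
Your proof is correct and is the standard argument for this barrier lemma: one applies the first-order condition at the interior minimum of the smooth function $q\mapsto h_q(V(q),V(q))-\varphi(q)$ for parallel unit fields $V$ valued in the lowest eigenspace, and then polarizes the resulting identity on the unit sphere of that eigenspace. The paper itself offers no proof (the lemma is quoted from the literature), and your argument coincides with the usual proof of the cited statement, including the two points that actually require care --- the non-smoothness of $\lambda_1$ at eigenvalue collisions, handled by the sandwich $h_q(V,V)\geq\lambda_1(q)\geq\varphi(q)$, and the vanishing of $\nabla V$ at $p$ so that only $\nabla_i h_{ab}$ survives the differentiation.
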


\begin{proof}[\textbf{Proof of Theorem \ref{thm2}}]
Case 1: There exists $p\in M^4$ such that at $p$ we have $\la_4=\la_3=\la>0$ and $\la_1=\la_2=-\la<0.$ Then we have $A_3=0$ on $M^4.$
The conclusion follows from \cite[Theorem 3.1]{DGW17}.\\
Case 2: There exists $p\in M^4$ such that at $p$ we have $\la_4=3\la>0$ and $\la_3=\la_2=\la_1=-\la<0.$
Then at $p$ we have $A_3=\sum\la_i^3=\frac1{\sqrt{3}} S^\frac{3}{2}.$ Since $A_3$ and $S$ are constant,  Lemma \ref{lem-p2-1} with $n=4$
implies $\la_4=3\la$ and $\la_3=\la_2=\la_1=-\la$ everywhere on $M^4$ for some fixed $\la>0,$ that is $M^4$ is isoparametric. In this case we know that the principal curvatures $\la_i,$ $1\leq i\leq 4$ are smooth on $M^4$ which implies $h_{jji}=0$ for any $1\leq i, j\leq 4.$ Moreover, applying Lemma \ref{lem-p2-2} we find
$h_{ijk}=0$ for any $1\leq i, j, k\leq 4.$ A straightforward calculation (see \cite{PT83} for example) for closed minimal hypersurfaces with constant square norm of second fundamental form $S$ gives
\[\Delta S=2(4-S)S+2\sum\limits_{i,j,k}h^2_{ijk}=0.\]
This implies $S=0$ or $4.$ That is, $M^4$ is either an equatorial $4$-sphere or a Clifford torus, which leads to a contradiction. Therefore, Theorem \ref{thm2} is proved.
\end{proof}

\bigskip
\section{Proof of Theorem \ref{thm1}}
\label{p1}
In this section, we will prove Theorem \ref{thm1}. In view of Theorem \ref{thm2} and Lemma \ref{lem-p2-1} we only need to prove the following result.
\begin{theorem}
\label{thm3}
Let $M^4\subset\mathbb S^5$ be an immersed closed minimal hypersurface with constant $S=\sum\limits_i\la_i^2\in(4, 12]$ and $A_3=\sum\limits_i\la_i^3\in [0, \frac1{\sqrt{3}}S^{3/2}).$ Then $M^4$ is isoparametric with
$S=12$ and $A_3=0.$ That is $M^4$ is a Cartan's minimal hypersurface.
\end{theorem}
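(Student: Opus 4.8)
The plan is to reduce everything to showing that the fourth power sum $f_4:=\sum_i\lambda_i^4$ is constant, and then to invoke the classification of isoparametric hypersurfaces. Since $M^4$ is minimal we have $\sum_i\lambda_i=0$, so together with the hypotheses that $S=\sum_i\lambda_i^2$ and $A_3=\sum_i\lambda_i^3$ are constant, Newton's identities make the elementary symmetric functions $p_1=0$, $p_2=-S/2$, $p_3=A_3/3$ all constant, while the characteristic polynomial of the shape operator is $t^4+p_2t^2-p_3t+p_4$ with $p_4=\det$ the only possibly non-constant coefficient. Because $p_4=\tfrac18(S^2-2f_4)$ is an affine function of $f_4$, the unordered set of principal curvatures is constant precisely when $f_4$ is constant, i.e. $M^4$ is isoparametric if and only if $f_4\equiv\text{const}$. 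Thus the whole theorem reduces to proving $f_4\equiv\text{const}$.

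First I would record the two Simons-type identities forced by constancy of $S$ and $A_3$. From $\Delta S=0$ and $\tfrac12\Delta S=|\nabla h|^2+S(4-S)$ one gets $\sum_{i,j,k}h_{ijk}^2=S(S-4)$, a positive constant since $S>4$. Differentiating $A_3=\operatorname{tr}(h^3)$ twice and using the pointwise Simons identity $\Delta h_{ij}=(4-S)h_{ij}$, the condition $\Delta A_3=0$ yields $\sum_{i,j,k,m}h_{ij,m}h_{jk,m}h_{ki}=\tfrac12(S-4)A_3$. A parallel computation expresses $\tfrac12\Delta f_4$ in terms of $f_4$ and quadratic-in-$\nabla h$ traces such as $\sum_m\operatorname{tr}\!\big((\nabla_m h)^2h^2\big)$ and $\sum_m\operatorname{tr}\!\big(\nabla_m h\,h\,\nabla_m h\,h\big)$; since $f_4$ is a priori non-constant, this last equation encodes exactly the dynamics that must be controlled.

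The heart of the argument, and the step I expect to be the main obstacle, is a maximum-principle (or integration-by-parts) argument built on a carefully chosen test function. The natural candidate is the smallest principal curvature $\lambda_1$, which is continuous on $M^4$ but only Lipschitz where eigenvalues collide; Lemma \ref{lem-p2-2} is precisely the tool for the collision points, giving $\nabla_i h_{kl}=\nabla_i\varphi\,\delta_{kl}$ on the lowest eigenspace whenever a smooth barrier $\varphi$ touches $\lambda_1$ from below. Constructing this barrier is the delicate new ingredient. Using $0\le A_3<\tfrac1{\sqrt3}S^{3/2}$, Lemma \ref{lem-p2-1} guarantees that the ``three equal eigenvalues'' configuration never occurs, which keeps the multiplicity of $\lambda_1$ under control; meanwhile Theorem \ref{thm2} has already disposed of the degenerate cases in which some point carries only two distinct principal curvatures. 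Together these keep the relevant multiplicities small and, crucially, force the derivative of the test function to carry a definite sign. Combining this sign with the two identities above, I would show that the covariant-derivative terms can only be mutually consistent if they vanish in the specific combination governing $f_4$, so that $f_4$ is constant and $M^4$ is isoparametric.

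Finally, with $M^4$ isoparametric I would classify it through M\"unzner's restrictions on the number $g$ of distinct principal curvatures and their multiplicities. For a minimal isoparametric hypersurface in $\mathbb S^5$ one has $S=4(g-1)$ with $g\in\{1,2,3,4,6\}$ and $\sum_i m_i=4$; the cases $g=3$ and $g=6$ are excluded since they would force all multiplicities equal with $3\mid 4$ or $6\mid 4$, leaving $S\in\{0,4,12\}$. Intersecting with $S\in(4,12]$ forces $S=12$ and $g=4$, the minimal model of which (with principal curvatures $\pm\cot\tfrac{\pi}{8},\pm\cot\tfrac{3\pi}{8}$) is symmetric about $0$ and hence has $A_3=0$. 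This is Cartan's minimal hypersurface, which completes the proof.
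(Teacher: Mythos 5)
Your reduction to the constancy of $f_4=\sum_i\lambda_i^4$ and your closing classification via M\"unzner's restrictions are both fine, but the entire content of the theorem lives in the step you yourself flag as ``the main obstacle,'' and there you have only a plan, not a proof. You propose to use $\lambda_1$ as a test function, touch it from below by a smooth barrier, invoke Lemma \ref{lem-p2-2} at collision points, and then assert that the controlled multiplicities ``force the derivative of the test function to carry a definite sign'' and that the gradient terms ``can only be mutually consistent if they vanish in the specific combination governing $f_4$.'' Neither assertion is substantiated. The sign of the derivative of the test function is precisely the difficulty the introduction identifies as the reason earlier work needed extra hypotheses ($A_3\equiv 0$ in \cite{LSS05}, everywhere-distinct principal curvatures in \cite{TWY20}): for $n\geq 4$ no choice such as $\lambda_1$ is known to yield both the required regularity and the required sign, and you give no computation showing yours does. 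As written, the isoparametricity step does not close.

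The paper's actual mechanism is different and worth contrasting with your outline. It works on the open set $Y$ where all four curvatures are distinct, introduces the $3$-form $\Phi=\sum_{i<j}\theta_{ij}$ built from the connection forms, and computes $d\Phi=\bigl(\sum_i L_i h_{44i}^2-\tfrac12 R_M\bigr)\text{vol}$ with every coefficient $L_i<0$ on $Y$. The test function is not a principal curvature but $K$, a regularization of $\min(f,g)$ with $f=(\lambda_3-\lambda_2)^2$ and $g=(\lambda_2-\lambda_1)^2$, smoothed near $\{f=g\}$ by Guan's lemma (Lemma \ref{lem-pre-1}); the needed one-sided control is on $dK\wedge\Phi$ over the thin sets $\{0<g<\delta_1\}$ and $\{0<f<\delta_1\}$ (Lemmas \ref{dk-lem1} and \ref{dk-lem2}), where all derivatives $h_{jji}$ are proportional to $h_{44i}$ by the Vandermonde relations \eqref{tf1}. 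Stokes's theorem applied to $(\eta_\e\circ K)\Phi$, together with the collapse of the volumes of $\{0<g<\e\}$ and $\{0<f<\e\}$ as $\e\to 0$, then forces $h_{44i}\equiv 0$ and $R_M\equiv 0$ on $Y$, whence $Y^c=\emptyset$ and $S=12$. If you wish to salvage your route, you must actually produce the barrier for $\lambda_1$ and the sign computation you are assuming; otherwise the de Almeida--Brito style device of $K$ and $\Phi$ is what converts the problem into one where the sign is checkable.
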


\subsection{Preliminaries}
\label{pre} In order to construct a nice auxiliary function to work with, we need the following Lemma.
\begin{lemma}\cite[Lemma 3.1]{Guan02}
\label{lem-pre-1}
For all $\delta>0$ there is an even function $h(t)\in C^{\infty}(\mathbb R)$ such that

(i) $h(t)\geq |t|$ for all $t\in\mathbb R,$ $h(t)=|t|$ for all $|t|\geq\delta;$

(ii) $|h'(t)|\leq 1$ and $h''(t)\geq 0$ for all $t\in\mathbb R$ and $h'(t)\geq 0$ for all $t\geq 0.$
\end{lemma}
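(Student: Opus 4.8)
The plan is to produce $h$ by mollifying $|t|$ at scale $\delta$, which automatically builds in smoothness and convexity while leaving the function untouched away from the origin. Fix an even, nonnegative $\rho\in C_c^\infty(\mathbb R)$ supported in $[-1,1]$ with $\int_{\mathbb R}\rho=1$, put $\rho_\delta(s):=\delta^{-1}\rho(s/\delta)$ (so that $\rho_\delta\ge0$ is even, supported in $[-\delta,\delta]$, with unit mass), and define
\[ h(t):=\int_{\mathbb R}|t-s|\,\rho_\delta(s)\,ds. \]
Since this is the convolution of the locally integrable function $|\cdot|$ with a smooth compactly supported kernel, $h\in C^\infty(\mathbb R)$; and the substitution $s\mapsto -s$ together with evenness of $\rho$ gives $h(-t)=\int_{\mathbb R}|t+s|\rho_\delta(s)\,ds=h(t)$, so $h$ is even.

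Next I would verify (i). For the domination $h(t)\ge|t|$, note that $\rho_\delta$ has vanishing first moment, $\int_{\mathbb R}s\,\rho_\delta(s)\,ds=0$, because it is even; Jensen's inequality for the convex function $|\cdot|$ then gives $h(t)\ge\lt|\int_{\mathbb R}(t-s)\rho_\delta(s)\,ds\rt|=|t|$. For the exact identity on $\{|t|\ge\delta\}$, observe that when $t\ge\delta$ the support condition forces $t-s\ge0$ for every $s\in[-\delta,\delta]$, hence $|t-s|=t-s$ and, using the vanishing first moment, $h(t)=t\int_{\mathbb R}\rho_\delta-\int_{\mathbb R}s\,\rho_\delta=t=|t|$; the case $t\le-\delta$ follows by evenness. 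This establishes (i).

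For (ii) I would compute the derivatives directly. Writing $h'(t)=\int_{-\infty}^{t}\rho_\delta(s)\,ds-\int_{t}^{\infty}\rho_\delta(s)\,ds$, one sees at once that $|h'(t)|\le\int_{\mathbb R}\rho_\delta=1$, and differentiating once more gives $h''(t)=2\rho_\delta(t)\ge0$ on all of $\mathbb R$, which is the convexity. Finally, since $h$ is even its derivative $h'$ is odd, so $h'(0)=0$; combined with $h''\ge0$ (hence $h'$ nondecreasing) this yields $h'(t)\ge h'(0)=0$ for all $t\ge0$, completing (ii).

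I do not expect a real obstacle: the construction is soft and each requirement collapses to a single property of the kernel. The only point demanding care is the choice of an even---not merely nonnegative---mollifier, since evenness is precisely what supplies both the vanishing first moment (used twice: for the Jensen domination and for the exact agreement $h=|t|$ off $[-\delta,\delta]$) and the parity of $h$ (used for $h'(0)=0$). A generic nonnegative kernel would still give smoothness, convexity, and the bound $|h'|\le1$, but one could only conclude $h=|t|+\text{const}$ outside $[-\delta,\delta]$ and might lose the sign of $h'$ near $0$.
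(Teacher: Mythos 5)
Your proof is correct. Note that the paper itself gives no argument for this lemma --- it is quoted verbatim from \cite[Lemma 3.1]{Guan02} --- so there is no internal proof to compare against; your mollification construction is the standard way to establish it and is complete as written. Each step checks out: smoothness and evenness of $h=|\cdot|*\rho_\delta$, the Jensen bound $h\geq |t|$, the exact identity $h(t)=|t|$ for $|t|\geq\delta$ from the support condition together with the vanishing first moment, the bound $|h'|\leq 1$ from unit mass, $h''=2\rho_\delta\geq 0$, and $h'(t)\geq h'(0)=0$ for $t\geq 0$ via oddness of $h'$. You also correctly isolated the one genuinely load-bearing choice, namely that the kernel be even rather than merely nonnegative: without the vanishing first moment one gets $h(t)=t-m$ for $t\geq\delta$ and $h(t)=-t+m$ for $t\leq-\delta$ (where $m=\int s\,\rho_\delta(s)\,ds$), which violates the required identity $h(t)=|t|$ there, and one loses $h'(0)=0$.
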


 By virtue of Theorem \ref{thm2} we also know that under the assumption of Theorem \ref{thm3} there is no point on $M^4$ at which there are exactly two distinct principal curvatures. Therefore, for any $p\in M^4,$ there are either three or four distinct principal curvatures at $p$. Moreover by our assumption that $A_3\geq0,$ it is easy to see that if there are three distinct principal curvatures then we either have $\la_3=\la_2$ or $\la_2=\la_1.$ For if $\la_4=\la_3$, then
$\la_2=-(\la_1+2\la_4)$ and so $\la_1^2+2\la_1 \la_4+3\la_4^2-\frac12S=0$. Then
\begin{eqnarray*}
&\la_1=-\la_4-\sqrt{\frac12S-2{\la_4}^2}\\
&\la_2=-\la_4+\sqrt{\frac12S-2{\la_4}^2}\\
&\la_3=\la_4
\end{eqnarray*}
which gives
\[\sum_{i=1}^4 {\la_i}^3=-6\la_4(\frac12 S-2\la_4^2)<0,\]
a contradiction.\\

 By reduction of variables, it is easy to see that there are  a finite number of
4-tuples $\{\la_1, \la_2, \la_3, \la_4\}$  satisfying
\[
\tag{I}\left\{
\begin{aligned}
&\sum\limits_{i=1}^4\la_i=0,\,\,\sum\limits_{i=1}^4\la_i^2=S,\,\,\sum\limits_{i=1}^4\la_i^3=A_3,\\
&\la_3-\la_2=\la_2-\la_1,
\end{aligned}\right.\]
where $S, A_3$ are constants.
Similarly there are  a finite number of 4-tuples $\{\la_1, \la_2, \la_3, \la_4\}$  satisfying
\[
\tag{II}\left\{
\begin{aligned}
&\sum\limits_{i=1}^4\la_i=0,\,\,\sum\limits_{i=1}^4\la_i^2=S,\,\,\sum\limits_{i=1}^4\la_i^3=A_3,\\
&\la_3=\la_2,
\end{aligned}\right.\]
and
\[
\tag{III}\left\{
\begin{aligned}
&\sum\limits_{i=1}^4\la_i=0,\,\,\sum\limits_{i=1}^4\la_i^2=S,\,\,\sum\limits_{i=1}^4\la_i^3=A_3,\\
&\la_2=\la_1,
\end{aligned}\right.\]
respectively. Now, define $f=(\la_3-\la_2)^2$ and $g=(\la_2-\la_1)^2.$  By our assumption that at every point of $M^4$ there are either three  or four distinct principle curvatures,  both $f$ and $g$ cannot both vanish for any point of $M^4$. Hence, there exists $\e_0=\e_0(S, A_3)>0$ so that
 \be \label{prop1}
f+g\geq \, 2\e_0 .
\ee
everywhere on $M^4$.
%
 Moreover, $f-g \not \equiv 0$ on all of $M^4$ for otherwise $M^4$ is isoparametric and must be the Cartan's isoparametric minimal hypersurface in $\mathbb S^5$ which is Willmore, i.e $A_3=0.$ This is contrary to our assumption that $f=g$.
Thus $f-g=0$ only on at most a finite number of disjoint closed subsets $\Gamma_i$ of  $M^4$ satisfying (I) and we can choose small disjoint tubes
 $\Gamma_i(r)$
  and define
\[ \e_1(r)=\sup_{\cup \Gamma_i(r)}|f-g| ,\,\, \e_2(r)=\inf_{M^4-\cup  \Gamma_i(r)}|f-g| ~.\]
Since both $\e_1(r)$ and $\e_2(r)$ tend to $0$ as $r\goto 0$, we can choose $r$ so small that $\max{(\e_2(r),\e_1(r))}< \e_0$.
Now let $\delta=\min{(\e_2(r),\e_1(r))}$ and define
\[K(p):=\frac{f(p)+g(p)}{2}-\frac{h(f(p)-g(p))}{2}\]
where $h$ is the function given in Lemma \ref{lem-pre-1}. Clearly $|f-g| \geq \delta $ on $M^4-\cup \Gamma_i(r)$ and so
\be \label{prop2} K=f+g-|f-g|=\min{(f,g)}\geq 0 \,\,\mbox{on $M^4-\cup \Gamma_i(r)$}~.\ee
Equation \eqref{prop2} also holds at points of $\cup \Gamma_i(r)$ where $|f-g|\geq \delta$ and so $K=\min{(f,g)}\geq \e_0$ by \eqref{prop1}.
At points of $\cup \Gamma_i(r)$  where $|f-g|<\delta,\, h(f-g)\leq \delta <\e_0$
and so by \eqref{prop1} and Lemma \ref{lem-pre-1}.
\be \label{prop3} K \geq 2\e_0- \delta \geq 2\e_0-\e_0=\e_0\,\,\mbox{on $\cup \Gamma_i(r)$} ~.\ee

\begin{remark} If $\{f=g\}=\emptyset$ then $|f-g|\geq \delta_0$ for some small $\delta_0=\delta_0(S,A_3)>0$. Then choosing
$\delta=\frac{\delta_0}2$ in Lemma \ref{lem-pre-1} gives that $K=\min{(f,g)}$ everywhere on $M^4$.
\end{remark}

Thus we have shown
\begin{lemma} \label{prop4} $K \geq 0$ on $M^4$ and $K=\min{(f,g)}$ outside a small neighborhood of
$\{p\in M^4: f(p)=g(p)\}$.  Moreover, since $f+g \geq 2\e_0$ there exists $\delta_1=\delta_1(S, A_3)<\e_0$ so that  $f\geq \e_0$ on the  set
 $\{0<g<\delta_1\}$ and $g\geq \e_0$ on the set $\{0<f<\delta_1\}$.
\end{lemma}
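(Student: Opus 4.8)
The plan is to obtain all of the assertions by assembling the three estimates already in place, so the argument is essentially bookkeeping built on the construction of $K$. The motivating point is that $\min(f,g)=\frac{f+g}{2}-\frac{|f-g|}{2}$ is only Lipschitz across the corner set $\{f=g\}$, so I replace $|f-g|$ by its smoothing $h(f-g)$ from Lemma \ref{lem-pre-1} and work with $K=\frac{f+g}{2}-\frac{h(f-g)}{2}$. The decisive feature is property (i) of $h$: since $h(t)=|t|$ for all $|t|\geq\delta$, on the region $\{|f-g|\geq\delta\}$ the smoothing introduces no error and $K=\min(f,g)$ exactly. As $f$ and $g$ are squares, $\min(f,g)\geq 0$, so $K\geq 0$ wherever this identity holds.

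Next I would prove $K\geq 0$ on all of $M^4$ by splitting along the tubes $\cup\Gamma_i(r)$. Outside the tubes the choice $\delta\leq\e_2(r)\leq|f-g|$ puts us in the regime $\{|f-g|\geq\delta\}$, so $K=\min(f,g)\geq 0$; this is \eqref{prop2}, and it simultaneously yields the second assertion, namely that $K=\min(f,g)$ off a small neighborhood of $\{f=g\}$. On the tubes I would distinguish two cases. Where $|f-g|\geq\delta$ the identity $K=\min(f,g)$ persists, and \eqref{prop1} upgrades it to $K\geq\e_0$. Where $|f-g|<\delta$ one has $h(f-g)\leq\delta<\e_0$, and combining this with $f+g\geq 2\e_0$ gives the lower bound $K\geq\e_0$ recorded in \eqref{prop3}. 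Hence $K\geq 0$ everywhere (indeed $K>0$ on the tubes), which is the first assertion.

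For the final claim I would argue purely algebraically from \eqref{prop1}. Fix any $\delta_1\in(0,\e_0)$, for instance $\delta_1=\e_0/2$. If a point satisfies $0<g<\delta_1$, then $g<\e_0$ and hence $f=(f+g)-g\geq 2\e_0-g>2\e_0-\e_0=\e_0$; the estimate $g\geq\e_0$ on $\{0<f<\delta_1\}$ follows by interchanging the roles of $f$ and $g$. The genuine difficulty here is not in this lemma but upstream: establishing the uniform positivity $f+g\geq 2\e_0$ in \eqref{prop1}, which rests on Theorem \ref{thm2} excluding points with exactly two distinct principal curvatures together with the finiteness of the admissible curvature tuples satisfying (I)--(III), and then arranging $r$ and $\delta$ so that $K$ is simultaneously smooth and agrees with $\min(f,g)$ off the tubes. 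Once those inputs are in hand, the lemma is an immediate consequence of \eqref{prop1}, \eqref{prop2}, and \eqref{prop3}.
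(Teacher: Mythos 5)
Your proposal is correct and follows the paper's own argument essentially verbatim: the paper's ``proof'' of this lemma is precisely the preceding chain \eqref{prop1}, \eqref{prop2}, \eqref{prop3} (using $h(t)=|t|$ for $|t|\geq\delta$ off the tubes, the bound $h(f-g)\leq\delta<\e_0$ inside them, and the purely algebraic deduction of the $\delta_1$ statement from $f+g\geq 2\e_0$), which is exactly what you assemble. The only cosmetic remark is that the intermediate claim $K=\min(f,g)\geq\e_0$ on the tube points with $|f-g|\geq\delta$ (which you inherit from the paper) does not literally follow from $f+g\geq 2\e_0$ alone, but only $K\geq 0$ is asserted in the lemma, so nothing is affected.
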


In the rest of this paper we set
\[Y=\{p\in M^4: K(p)>0\},\]
\[Y_1^c=\{p\in M^4: g(p)=0\},\]
and
\[Y_2^c=\{p\in M^4: f(p)=0\}.\]
It is clear that $f,\,g,\,$ and $K$ are smooth on $Y$ and $Y^c=Y_1^c\cup Y_2^c.$

\begin{corollary}
\label{cor-pre-2}
The set $Y\neq\emptyset$ and $\text{vol}(Y)>0.$
\end{corollary}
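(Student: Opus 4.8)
The goal is to show that the open set $Y=\{K>0\}$ is nonempty with positive volume. The plan is to argue by contradiction: suppose $Y=\emptyset$, i.e. $K\equiv 0$ on all of $M^4$, and derive a contradiction with the structural facts already established. Since Lemma \ref{prop4} gives $K\geq 0$ on $M^4$ together with $K\geq\e_0>0$ on the tubes $\cup\Gamma_i(r)$ (by \eqref{prop1} and \eqref{prop3}), the function $K$ cannot vanish identically; indeed the inequalities \eqref{prop1}, \eqref{prop2}, \eqref{prop3} show $K\geq\e_0$ at every point of $\cup\Gamma_i(r)$ and $K=\min(f,g)\geq 0$ elsewhere. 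So the first step is simply to record that $Y\supseteq\cup\Gamma_i(r)$ (when the latter is nonempty) and hence $Y\neq\emptyset$.

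The subtler point, and the one I expect to be the main obstacle, is ruling out the degenerate possibility that $\{f=g\}=M^4$, since in that borderline situation the tubes $\Gamma_i(r)$ could in principle exhaust $M^4$ and the above lower bounds would say nothing about an \emph{open} region where $K=\min(f,g)$. But this case was already excluded in the preliminaries: the paragraph preceding Lemma \ref{prop4} shows that $f-g\equiv 0$ on all of $M^4$ forces $M^4$ to be the Cartan isoparametric minimal hypersurface with $A_3=0$, contradicting the standing hypothesis $A_3\in[0,\tfrac1{\sqrt3}S^{3/2})$ together with $f=g$. Hence $\{f=g\}$ is a proper closed subset, its complement is a nonempty open set, and on that complement $K=\min(f,g)$. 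I would therefore take a point $q$ where $f(q)\neq g(q)$, say in $M^4-\cup\Gamma_i(r)$, so that $K(q)=\min(f(q),g(q))$.

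To pass from $K\neq 0$ at a point to positive volume, I would use \eqref{prop1}: since $f+g\geq 2\e_0$ everywhere, at every point at least one of $f,g$ is $\geq\e_0$. If $\min(f,g)$ were to vanish on a full-measure set, then on that set one of $f,g$ is zero while the other is $\geq 2\e_0$, i.e. the set lies in $Y_1^c\cup Y_2^c=Y^c$. The claim $\mathrm{vol}(Y)>0$ is thus equivalent to $\mathrm{vol}(Y_1^c\cup Y_2^c)<\mathrm{vol}(M^4)$. Now $Y_1^c=\{g=0\}=\{\la_2=\la_1\}$ and $Y_2^c=\{f=0\}=\{\la_3=\la_2\}$ are the zero sets of the analytic (or at least smooth) symmetric functions $g=(\la_2-\la_1)^2$ and $f=(\la_3-\la_2)^2$ of the second fundamental form; these cannot both have full measure unless $f\equiv 0$ or $g\equiv 0$ on an open set.

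Finally I would rule out $Y^c$ having full measure directly. If $\mathrm{vol}(Y)=0$ then $K=0$ a.e., but $K$ is continuous and $K\geq\e_0$ on the open tubes $\cup\Gamma_i(r)$ which have positive volume whenever $\{f=g\}\neq\emptyset$; when $\{f=g\}=\emptyset$ the Remark gives $K=\min(f,g)$ everywhere and $K\geq\e_0$ on $M^4$ by \eqref{prop1}, so $\mathrm{vol}(Y)=\mathrm{vol}(M^4)>0$ outright. In the remaining generic case, continuity of $K$ forces $\{K>0\}$ to contain an open neighborhood of any point of $\cup\Gamma_i(r)$, and such a neighborhood has positive $4$-dimensional volume. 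Either way $\mathrm{vol}(Y)>0$, completing the proof. The one place demanding care is the borderline $\{f=g\}=M^4$ configuration, which is exactly where the Cartan-hypersurface exclusion from the preliminaries does the essential work.
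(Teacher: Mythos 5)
Your main line of argument is the same as the paper's, which disposes of the corollary in two sentences: $Y\neq\emptyset$ because the construction in the preliminaries forces $K\geq\e_0$ somewhere (namely on the tubes $\cup\Gamma_i(r)$, by \eqref{prop3}), and $\text{vol}(Y)>0$ because $K$ is continuous, so $Y=\{K>0\}$ is open. Your opening and closing paragraphs reproduce exactly this reasoning, and in the generic case (tubes nonempty) your argument is correct.

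However, one step in your treatment of the edge case is genuinely wrong. In the branch $\{f=g\}=\emptyset$ you assert that $K=\min(f,g)\geq\e_0$ on all of $M^4$ ``by \eqref{prop1}.'' But \eqref{prop1} is $f+g\geq 2\e_0$, which only bounds $\max(f,g)$ from below; it gives no lower bound on $\min(f,g)$, which vanishes precisely on $Y^c=Y_1^c\cup Y_2^c$, a set the paper explicitly allows to be nonempty. So the inference fails, and in that branch $Y\neq\emptyset$ does not follow from what you have written: you would need to exhibit a point with four distinct principal curvatures, i.e.\ rule out the possibility that (say) $g\equiv 0$ on a connected $M^4$. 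That requires a separate argument (for instance, the finitely many admissible $4$-tuples in system (III) would force the $\la_i$ to be constant, hence $M^4$ isoparametric with three distinct principal curvatures of multiplicities $(2,1,1)$, which is impossible in $\mathbb S^5$) --- not \eqref{prop1}. Relatedly, the measure-theoretic detour in your third paragraph (``these cannot both have full measure unless $f\equiv 0$ or $g\equiv 0$ on an open set'') is asserted without justification, though you do not ultimately rely on it. The clean and sufficient argument is the one you state first and last: $K\geq\e_0$ at some point by \eqref{prop3}, $K$ is continuous, hence $Y$ is a nonempty open set and has positive volume.
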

\begin{proof}
It follows immediately from Lemma \ref{prop4} that $Y\neq\emptyset.$ Since $K$ is a continuous function on $M^4,$ we know $Y$ is an open set  and
so $\text{vol}(Y)>0$.
\end{proof}

\subsection{Structure equations}
\label{str} In this subsection, we will look at the structure equations on the open subset $Y$ of $M^4.$ By our assumption, the principal curvatures $\{\la_1, \la_2, \la_3, \la_4\}$ are simple on $Y.$ Let $\{e_1, e_2, e_3, e_4\}$ be the corresponding eigenvectors; then locally $\{e_1, e_2, e_3, e_4\}$ form an oriented orthonormal frame fields on $Y.$
Let $\{\og_1, \og_2, \og_3, \og_4\}$ be the dual frame. Then one has the following structure equations on $M^4$:
\be\label{str1}
d\og_i=\sum\limits_j\og_{ij}\w\og_j, \,\, \og_{ij}+\og_{ji}=0,
\ee
and
\be\label{str2}
d\og_{ij}=\sum\limits_k\og_{ik}\w\og_{kj}-\frac{1}{2}\sum\limits_{k, l}R_{ijkl}\og_k\w\og_l.
\ee
Here $R_{ijkl}$ is the curvature tensor of the induced metric on $M^4$. Let $II=\sum\limits_{i, j}h_{ij}\og_i\otimes\og_j$ denote the second fundamental form of $M^4$, then
\[R_{ijkl}=\delta_{ik}\delta_{jl}-\delta_{il}\delta_{jk}+h_{ik}h_{jl}-h_{il}h_{jk}.\]
Moreover by the definition of the covariant derivative of the second fundamental form, we have
\be\label{str3}
\og_{ij}=\sum\limits_k\frac{h_{ijk}\og_k}{\lambda_i-\lambda_j}, \,\,i\neq j.
\ee

\subsection{The $3$-form $\Phi$}
\label{tf}
Differentiating $\sum\la_j=0,$ $\sum\la_j^2=S$ and $\sum\la_j^3=A_3$ with $S, A_3$ being constants gives
\[\sum\limits_jh_{jji}=0,\,\,\sum\limits_j\la_jh_{jji}=0,\,\,\mbox{and $\sum\limits_j\la_j^2h_{jji}=0.$}\]
Therefore, for any fixed $1\leq i\leq 4$ we obtain using Cramer's rule and Vandermonde determinants
\be\label{tf1}
\begin{aligned}
h_{11i}&=-\frac{(\la_4-\la_3)(\la_4-\la_2)}{(\la_3-\la_1)(\la_2-\la_1)}h_{44i},\\
h_{22i}&=\frac{(\la_4-\la_3)(\la_4-\la_1)}{(\la_2-\la_1)(\la_3-\la_2)}h_{44i},\\
h_{33i}&=-\frac{(\la_4-\la_2)(\la_4-\la_1)}{(\la_3-\la_1)(\la_3-\la_2)}h_{44i}.
\end{aligned}
\ee
Following the notations in \cite[page 7]{CL24}, we define
\be\label{tf2}
\begin{aligned}
\theta_{12}&=\og_3\w\og_4\w\og_{12},\,\,\theta_{13}&=\og_4\w\og_2\w\og_{13},\,\,\theta_{14}&=\og_2\w\og_3\w\og_{14},\\
\theta_{23}&=\og_1\w\og_4\w\og_{23},\,\,\theta_{24}&=\og_3\w\og_1\w\og_{24},\,\,\theta_{34}&=\og_1\w\og_2\w\og_{34}.
\end{aligned}
\ee
We introduce a $3$-form $\Phi$ on $Y$ as follows:
\be\label{tf3}
\Phi=\sum\limits_{i<j}\theta_{ij}.
\ee
In the following two lemmas we will compute $d\Phi$ on $Y.$ The calculations are long and tedious but straightforward.
Let  $d\theta_{ij}=X_{ij}\text{vol},$
where \text{vol} is the volume form of $M^4$.

\begin{lemma}
\label{lem-tf-1}
Let $d\theta_{12}=X_{12}\text{vol}$. Then on the set $Y$ we have
\be\label{tf-12}
\begin{aligned}
X_{12}&=\frac{(\la_3-\la_4)[(\la_1-\la_3)^2(\la_2-\la_3)-(\la_1-\la_4)^2(\la_2-\la_4)]h^2_{441}}{(\la_1-\la_2)^2(\la_1-\la_3)^2(\la_2-\la_3)^2}\\
&+\frac{(\la_3-\la_4)[(\la_2-\la_3)^2(\la_1-\la_3)-(\la_2-\la_4)^2(\la_1-\la_4)]h^2_{442}}{(\la_1-\la_2)^2(\la_1-\la_3)^2(\la_2-\la_3)^2}\\
&+\frac{(\la_1-\la_4)(\la_2-\la_4)(\la_3-\la_4)^2h^2_{443}}{(\la_1-\la_2)^2(\la_1-\la_3)^2(\la_2-\la_3)^2}\\
&+\frac{(\la_1-\la_3)(\la_2-\la_3)(\la_3-\la_4)^2h^2_{444}}{(\la_1-\la_2)^2(\la_1-\la_3)^2(\la_2-\la_3)^2}\\
&+\frac{2h^2_{123}}{(\la_1-\la_3)(\la_2-\la_3)}+\frac{2h^2_{124}}{(\la_1-\la_4)(\la_2-\la_4)}-R_{1212}.
\end{aligned}
\ee
\end{lemma}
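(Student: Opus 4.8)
The plan is to compute $d\theta_{12}$ directly from the definition $\theta_{12}=\og_3\w\og_4\w\og_{12}$ in \eqref{tf2} and then read off the coefficient of the volume form. First I would apply the Leibniz rule to the product of the three $1$-forms,
\[
d\theta_{12}=d\og_3\w\og_4\w\og_{12}-\og_3\w d\og_4\w\og_{12}+\og_3\w\og_4\w d\og_{12},
\]
and substitute the structure equations: \eqref{str1} for $d\og_3$ and $d\og_4$, and \eqref{str2} for $d\og_{12}=\sum_k\og_{1k}\w\og_{k2}-\frac12\sum_{k,l}R_{12kl}\og_k\w\og_l$. Every connection form that appears is then expanded through \eqref{str3}, $\og_{ij}=\sum_k h_{ijk}\og_k/(\la_i-\la_j)$, so the whole expression becomes a sum of wedge monomials in the $\og_k$ whose coefficients are rational in the $\la_i$ and quadratic in the $h_{ijk}$.

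Since $\theta_{12}$ is a $3$-form, $d\theta_{12}$ is a top form and $X_{12}$ is its coefficient against $\og_1\w\og_2\w\og_3\w\og_4$. The external factor $\og_3\w\og_4$ forces me to retain only the pieces of $d\og_3,d\og_4,d\og_{12}$ and $\og_{12}$ that supply the two missing slots $\og_1\w\og_2$, which already kills, for instance, the $\og_4$-component of $\og_{12}$ in the first term and the $\og_{34}\w\og_4$ part of $d\og_3$. Two kinds of surviving contributions remain. The curvature term of \eqref{str2} has $\og_1\w\og_2$-component equal to $-R_{1212}\,\og_1\w\og_2$, which reorders into $-R_{1212}\,$\text{vol} and gives exactly the $-R_{1212}$ appearing in \eqref{tf-12}. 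The remaining contributions are the quadratic $h_{ijk}$ terms coming from the connection products $\og_{1k}\w\og_{k2}$ and from $d\og_3,\,d\og_4$ wedged against $\og_{12}$; invoking the full symmetry of $h_{ijk}$ (Codazzi) these organize into products of diagonal components $h_{iij}$ and squares of the fully distinct components $h_{123}$ and $h_{124}$ (the components $h_{134},h_{234}$ drop out because they only enter through monomials containing a repeated $\og_4$).

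The final step is to eliminate $h_{11i},h_{22i},h_{33i}$ in favour of $h_{44i}$ using the Cramer's-rule identities \eqref{tf1}. After this substitution each product of diagonal components becomes a multiple of one of $h_{441}^2,h_{442}^2,h_{443}^2,h_{444}^2$, and collecting like terms over the common denominator $(\la_1-\la_2)^2(\la_1-\la_3)^2(\la_2-\la_3)^2$ produces precisely the coefficients displayed in \eqref{tf-12}. I expect the main obstacle to be purely the algebraic bookkeeping: tracking the many wedge-product signs together with the permutation sign that reorders each surviving monomial into \text{vol}, and then checking that the several distinct contributions to each $h_{44i}^2$ coefficient—assembled only after the substitution \eqref{tf1}—really do combine into the stated rational functions, in particular the bracketed factors $(\la_1-\la_3)^2(\la_2-\la_3)-(\la_1-\la_4)^2(\la_2-\la_4)$ multiplying $h_{441}^2$ and its analogue for $h_{442}^2$. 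This is the long but entirely mechanical computation the statement describes as tedious but straightforward.
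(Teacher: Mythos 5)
Your proposal is correct and follows essentially the same route as the paper: Leibniz rule on $\og_3\w\og_4\w\og_{12}$, substitution of the structure equations \eqref{str1}--\eqref{str3}, retention of the six surviving wedge monomials plus the curvature term $-R_{1212}\,\text{vol}$, organization of the quadratic terms into diagonal products $h_{iij}h_{kkj}$ and the squares $h^2_{123},h^2_{124}$, and finally elimination via \eqref{tf1}. No further comment is needed.
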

\begin{proof}
In view of subsection \ref{str} we find
\be\label{tf4}
\begin{aligned}
d\theta_{12}&=(d\og_3)\w\og_4\w\og_{12}-\og_3\w(d\og_4)\w\og_{12}+\og_3\w\og_4\w(d\og_{12})\\
&=\og_{3k}\w\og_k\w\og_4\w\og_{12}-\og_3\w\og_{4k}\w\og_k\w\og_{12}+\og_3\w\og_4\w(\og_{1k}\w\og_{k2}-R_{1212}\,\og_1\w\og_2)\\
&=\og_{31}\w\og_1\w\og_4\w\og_{12}+\og_{32}\w\og_2\w\og_4\w\og_{12}-\og_{3}\w\og_{41}\w\og_1\w\og_{12}\\
&-\og_{3}\w\og_{42}\w\og_2\w\og_{12}+\og_{3}\w\og_4\w\og_{13}\w\og_{32}+\og_{3}\w\og_4\w\og_{14}\w\og_{42}-R_{1212}\text{vol}\\
&=\textcircled{1}+\textcircled{2}-\textcircled{3}-\textcircled{4}+\textcircled{5}+\textcircled{6}-R_{1212}\text{vol}.
\end{aligned}
\ee
Applying equation \eqref{str3} we derive
\be\label{tf5}
\begin{aligned}
\textcircled{1}&=\lt[\frac{h_{312}\og_2+h_{313}\og_3}{\la_3-\la_1}\rt]\w\og_1\w\og_4\w\lt[\frac{h_{122}\og_2+h_{123}\og_3}{\la_1-\la_2}\rt]\\
&=\lt[\frac{h^2_{123}}{(\la_3-\la_1)(\la_1-\la_2)}-\frac{h_{331}h_{221}}{(\la_3-\la_1)(\la_1-\la_2)}\rt]\text{vol}.
\end{aligned}
\ee

\be\label{tf6}
\begin{aligned}
\textcircled{2}&=\lt[\frac{h_{321}\og_1+h_{323}\og_3}{\la_3-\la_2}\rt]\w\og_2\w\og_4\w\lt[\frac{h_{123}\og_3+h_{121}\og_1}{\la_1-\la_2}\rt]\\
&=\lt[\frac{-h^2_{123}}{(\la_3-\la_2)(\la_1-\la_2)}+\frac{h_{332}h_{112}}{(\la_3-\la_2)(\la_1-\la_2)}\rt]\text{vol}.
\end{aligned}
\ee

\be\label{tf7}
\begin{aligned}
\textcircled{3}&=\og_3\w\lt(\frac{h_{412}\og_2+h_{414}\og_4}{\la_4-\la_1}\rt)\w\og_1\w\lt(\frac{h_{124}\og_4+h_{122}\og_2}{\la_1-\la_2}\rt)\\
&=\lt[\frac{-h^2_{124}}{(\la_4-\la_1)(\la_1-\la_2)}+\frac{h_{441}h_{221}}{(\la_4-\la_1)(\la_1-\la_2)}\rt]\text{vol}.
\end{aligned}
\ee

\be\label{tf8}
\begin{aligned}
\textcircled{4}&=\og_3\w\lt(\frac{h_{421}\og_1+h_{424}\og_4}{\la_4-\la_2}\rt)\w\og_2\w\lt(\frac{h_{124}\og_4+h_{121}\og_1}{\la_1-\la_2}\rt)\\
&=\lt[\frac{h^2_{124}}{(\la_4-\la_2)(\la_1-\la_2)}-\frac{h_{442}h_{112}}{(\la_4-\la_2)(\la_1-\la_2)}\rt]\text{vol}.
\end{aligned}
\ee

\be\label{tf9}
\begin{aligned}
\textcircled{5}&=\og_3\w\og_4\w\lt(\frac{h_{132}\og_2+h_{131}\og_1}{\la_1-\la_3}\rt)\w\lt(\frac{h_{321}\og_1+h_{322}\og_2}{\la_3-\la_2}\rt)\\
&=\lt[\frac{-h^2_{123}}{(\la_1-\la_3)(\la_3-\la_2)}+\frac{h_{113}h_{223}}{(\la_1-\la_3)(\la_3-\la_2)}\rt]\text{vol}.
\end{aligned}
\ee

\be\label{tf10}
\begin{aligned}
\textcircled{6}&=\og_3\w\og_4\w\lt(\frac{h_{141}\og_1+h_{142}\og_2}{\la_1-\la_4}\rt)\w\lt(\frac{h_{421}\og_1+h_{422}\og_2}{\la_4-\la_2}\rt)\\
&=\lt[\frac{-h^2_{124}}{(\la_1-\la_4)(\la_4-\la_2)}+\frac{h_{114}h_{224}}{(\la_1-\la_4)(\la_4-\la_2)}\rt]\text{vol}.
\end{aligned}
\ee
Lemma \ref{lem-tf-1} follows from combining \eqref{tf1} with \eqref{tf4}--\eqref{tf10}.
\end{proof}

Similarly, we can show (see also \cite[page 7 and 8]{CL24})
\begin{lemma}
\label{lem-tf-2}
Let $d\theta_{ij}=X_{ij}\text{vol},$ where $(i, j)\neq(1, 2)$ and $1\leq i, j\leq 4.$ Then on the set $Y$ we have
\be\label{tf-13}
\begin{aligned}
X_{13}&=-\frac{(\la_2-\la_4)[(\la_1-\la_2)^2(\la_2-\la_3)+(\la_1-\la_4)^2(\la_3-\la_4)]h^2_{441}}{(\la_1-\la_2)^2(\la_1-\la_3)^2(\la_2-\la_3)^2}\\
&+\frac{(\la_2-\la_4)[(\la_1-\la_2)(\la_2-\la_3)^2-(\la_1-\la_4)(\la_3-\la_4)^2]h^2_{443}}{(\la_1-\la_2)^2(\la_1-\la_3)^2(\la_2-\la_3)^2}\\
&+\frac{(\la_1-\la_4)(\la_2-\la_4)^2(\la_3-\la_4)h^2_{442}}{(\la_1-\la_2)^2(\la_1-\la_3)^2(\la_2-\la_3)^2}\\
&-\frac{(\la_1-\la_2)(\la_2-\la_4)^2(\la_2-\la_3)h^2_{444}}{(\la_1-\la_2)^2(\la_1-\la_3)^2(\la_2-\la_3)^2}\\
&-\frac{2h^2_{123}}{(\la_1-\la_2)(\la_2-\la_3)}+\frac{2h^2_{134}}{(\la_1-\la_4)(\la_3-\la_4)}-R_{1313},
\end{aligned}
\ee

\be\label{tf-14}
\begin{aligned}
X_{14}&=\frac{(\la_2-\la_3)[(\la_1-\la_3)^2(\la_3-\la_4)-(\la_1-\la_2)^2(\la_2-\la_4)]h^2_{441}}{(\la_1-\la_2)^2(\la_1-\la_3)^2(\la_2-\la_3)^2}\\
&-\frac{(\la_3-\la_4)h^2_{442}}{(\la_1-\la_2)^2(\la_1-\la_3)}-\frac{(\la_2-\la_4)h^2_{443}}{(\la_1-\la_2)(\la_1-\la_3)^2}\\
&+\frac{(\la_2-\la_3)[(\la_1-\la_2)(\la_2-\la_4)^2-(\la_1-\la_3)(\la_3-\la_4)^2]h^2_{444}}{(\la_1-\la_2)^2(\la_1-\la_3)^2(\la_2-\la_3)^2}\\
&-\frac{2h^2_{124}}{(\la_1-\la_2)(\la_2-\la_4)}-\frac{2h^2_{134}}{(\la_1-\la_3)(\la_3-\la_4)}-R_{1414},
\end{aligned}
\ee

\be\label{tf-23}
\begin{aligned}
X_{23}&=\frac{(\la_1-\la_4)^2(\la_2-\la_4)(\la_3-\la_4)h^2_{441}}{(\la_1-\la_2)^2(\la_1-\la_3)^2(\la_2-\la_3)^2}\\
&+\frac{(\la_1-\la_4)^2h^2_{444}}{(\la_1-\la_2)(\la_1-\la_3)(\la_2-\la_3)^2}\\
&-\frac{(\la_1-\la_4)[(\la_2-\la_4)^2(\la_3-\la_4)+(\la_1-\la_2)^2(\la_1-\la_3)]h^2_{442}}{(\la_1-\la_2)^2(\la_1-\la_3)^2(\la_2-\la_3)^2}\\
&-\frac{(\la_1-\la_4)[(\la_2-\la_4)(\la_3-\la_4)^2+(\la_1-\la_2)(\la_1-\la_3)^2]h^2_{443}}{(\la_1-\la_2)^2(\la_1-\la_3)^2(\la_2-\la_3)^2}\\
&+\frac{2h^2_{123}}{(\la_1-\la_2)(\la_1-\la_3)}+\frac{2h^2_{234}}{(\la_2-\la_4)(\la_3-\la_4)}-R_{2323},
\end{aligned}
\ee

\be\label{tf-24}
\begin{aligned}
X_{24}&=-\frac{(\la_3-\la_4)h^2_{441}}{(\la_1-\la_2)^2(\la_2-\la_3)}+\frac{(\la_1-\la_4)h^2_{443}}{(\la_1-\la_2)(\la_2-\la_3)^2}\\
&+\frac{(\la_1-\la_3)[(\la_3-\la_4)(\la_2-\la_3)^2-(\la_1-\la_4)(\la_1-\la_2)^2]h^2_{442}}{(\la_1-\la_2)^2(\la_1-\la_3)^2(\la_2-\la_3)^2}\\
&-\frac{(\la_1-\la_3)[(\la_2-\la_3)(\la_3-\la_4)^2+(\la_1-\la_2)(\la_1-\la_4)^2]h^2_{444}}{(\la_1-\la_2)^2(\la_1-\la_3)^2(\la_2-\la_3)^2}\\
&+\frac{2h^2_{124}}{(\la_1-\la_2)(\la_1-\la_4)}-\frac{2h^2_{234}}{(\la_2-\la_3)(\la_3-\la_4)}-R_{2424},
\end{aligned}
\ee

\be\label{tf-34}
\begin{aligned}
X_{34}&=\frac{(\la_2-\la_4)h^2_{441}}{(\la_1-\la_3)^2(\la_2-\la_3)}+\frac{(\la_1-\la_4)h^2_{442}}{(\la_1-\la_3)(\la_2-\la_3)^2}\\
&+\frac{(\la_1-\la_2)[(\la_2-\la_3)^2(\la_2-\la_4)-(\la_1-\la_3)^2(\la_1-\la_4)]h^2_{443}}{(\la_1-\la_2)^2(\la_1-\la_3)^2(\la_2-\la_3)^2}\\
&+\frac{(\la_1-\la_2)[(\la_2-\la_3)(\la_2-\la_4)^2-(\la_1-\la_3)(\la_1-\la_4)^2]h^2_{444}}{(\la_1-\la_2)^2(\la_1-\la_3)^2(\la_2-\la_3)^2}\\
&+\frac{2h^2_{134}}{(\la_1-\la_3)(\la_1-\la_4)}+\frac{2h^2_{234}}{(\la_2-\la_3)(\la_2-\la_4)}-R_{3434}.
\end{aligned}
\ee
\end{lemma}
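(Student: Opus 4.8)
The proof follows exactly the template of Lemma \ref{lem-tf-1}, repeated for each of the five remaining index pairs, so my plan is to set up the computation uniformly and let the symmetry of the construction do as much of the work as possible. For a pair $(i,j)$ I write $\theta_{ij}=\og_a\w\og_b\w\og_{ij}$, where $\{a,b\}=\{1,2,3,4\}\setminus\{i,j\}$ are ordered so that $(a,b,i,j)$ is an even permutation of $(1,2,3,4)$; this is precisely the convention built into the definitions \eqref{tf2}, and it guarantees that every nonzero monomial produced is a positive multiple of $\text{vol}$. Applying the Leibniz rule together with the structure equations \eqref{str1}--\eqref{str2} gives
\[
d\theta_{ij}=(d\og_a)\w\og_b\w\og_{ij}-\og_a\w(d\og_b)\w\og_{ij}+\og_a\w\og_b\w\lt(\og_{ik}\w\og_{kj}-R_{ijij}\,\og_i\w\og_j\rt),
\]
which, just as in \eqref{tf4}, expands into six connection-form products $\textcircled{1},\dots,\textcircled{6}$ together with the curvature term $-R_{ijij}\text{vol}$.

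The second step is identical to \eqref{tf5}--\eqref{tf10}: I substitute the connection forms \eqref{str3} into each of the six products and retain only the monomials that complete $\og_a\w\og_b$ to the full volume form. Each of the six terms then reduces to a sum of one genuine off-diagonal square $h_{pqr}^2$ (with $p,q,r$ distinct) and one diagonal product $h_{ppr}h_{qqr}$, divided by a product of two eigenvalue gaps; here I use throughout that $h_{ijk}$ is totally symmetric in its indices.

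The third step is the simplification, which again proceeds as for $X_{12}$. The off-diagonal squares combine cleanly: for each complementary index $r$, the several contributions carrying $h_{ijr}^2$ collapse, after a common-denominator computation, to the single coefficient $2/[(\la_i-\la_r)(\la_j-\la_r)]$, and these are the last two explicit $h^2$-terms in each of \eqref{tf-13}--\eqref{tf-34}. The diagonal products $h_{ppr}h_{qqr}$ carry the bulk of the work: I eliminate $h_{11r},h_{22r},h_{33r}$ in favour of $h_{44r}$ by means of the Cramer/Vandermonde relations \eqref{tf1}, so that each diagonal product becomes a rational multiple of one of $h_{441}^2,\dots,h_{444}^2$; collecting these over the common denominator $(\la_1-\la_2)^2(\la_1-\la_3)^2(\la_2-\la_3)^2$ produces the four $h_{44r}^2$-coefficients recorded in the statement.

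I expect the only genuine obstacle to be bookkeeping. There is no conceptual difficulty beyond Lemma \ref{lem-tf-1}; the risk lies entirely in the volume of algebra and in the sign and gap-factor slips that creep in when applying \eqref{tf1} and clearing denominators. To contain this I would exploit the permutation symmetry of the setup — relabelling the eigenvalues carries one pair $(i,j)$ to another, so that several of the $X_{ij}$ can be read off from a single master computation rather than recomputed from scratch — and I would cross-check by verifying that the combination $\sum_{i<j}X_{ij}$, which is the quantity actually needed for $d\Phi$, simplifies as expected. Comparison with \cite[pages 7 and 8]{CL24} then furnishes a final independent check of the six formulas.
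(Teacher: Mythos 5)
Your proposal is correct and takes essentially the same route as the paper: the paper establishes Lemma \ref{lem-tf-2} by exactly the computation of Lemma \ref{lem-tf-1} repeated for the remaining index pairs (it simply writes ``Similarly'' and points to \cite{CL24}), which is the template you describe — Leibniz rule plus \eqref{str1}--\eqref{str3}, substitution of \eqref{tf1} to reduce to the $h_{44r}^2$, and collection over the common denominator. Your auxiliary observations (the orientation convention in \eqref{tf2} making $(a,b,i,j)$ an even permutation, and the collapse of the off-diagonal squares to the coefficient $2/[(\la_i-\la_r)(\la_j-\la_r)]$) are accurate and consistent with the worked case \eqref{tf-12}.
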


Combining Lemma \ref{lem-tf-1} and Lemma \ref{lem-tf-2} we obtain
\be\label{form-derivative}
d\Phi=\lt(\sum\limits_{i=1}^4L_ih^2_{44i}-\frac{1}{2}R_M\rt)\text{vol}\,\,\mbox{on $Y$.}
\ee
If we denote $\gamma:=(\la_2-\la_1)^2(\la_3-\la_1)^2(\la_3-\la_2)^2$ then
\[\gamma L_1=(\la_4-\la_3)[(\la_3-\la_1)^2(\la_3-\la_2)-(\la_4-\la_2)(\la_4-\la_1)^2]-(\la_4-\la_2)(\la_3-\la_2)(\la_2-\la_1)^2,\]
\[\gamma L_2=(\la_4-\la_3)[(\la_3-\la_2)^2(\la_3-\la_1)-(\la_4-\la_1)(\la_4-\la_2)^2]-(\la_4-\la_1)(\la_3-\la_1)(\la_2-\la_1)^2,\]
\[\gamma L_3=(\la_2-\la_1)[(\la_3-\la_2)^2(\la_4-\la_2)-(\la_4-\la_1)(\la_3-\la_1)^2]-(\la_4-\la_1)(\la_4-\la_2)(\la_4-\la_3)^2,\]
and
\[\gamma L_4=(\la_2-\la_1)[(\la_4-\la_2)^2(\la_3-\la_2)-(\la_3-\la_1)(\la_4-\la_1)^2]-(\la_3-\la_1)(\la_3-\la_2)(\la_4-\la_3)^2.\]
Recall that on $Y$ we have $\la_4>\la_3>\la_2>\la_1.$ Therefore, it is clear that $L_i<0$ for $1\leq i\leq 4.$

\subsection{Computing $dK\w\Phi$}
\label{dk}
\label{com}
We will compute $dK\w\Phi$ on the set $\{0<K<\delta_1\}\cap Y,$ where $\delta_1>0$ is the constant chosen in Lemma \ref{prop4}.
\begin{lemma}
\label{dk-lem1}
Let $g=(\la_2-\la_1)^2,$ and $\delta_1>0$ be the constant chosen in Lemma \ref{prop4}. Then on the set $\{p\in M^4: 0<g(p)<\delta_1\},$ we have
\be\label{dk1}
dg\w\Phi\leq C\lt(\sum\limits_{i=1}^4h_{44i}^2\rt)\text{vol}
\ee
for some $C=C(S, A_3, \delta_1)>0.$
\end{lemma}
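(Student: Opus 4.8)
The plan is to reduce $dg\wedge\Phi$ to a multiple of the volume form of the form $\big(\sum_{i=1}^4 P_i\,h_{44i}^2\big)\text{vol}$ and then show that each coefficient $P_i$ is bounded \emph{above} on $\{0<g<\delta_1\}\cap Y$ by a constant depending only on $S,A_3,\delta_1$; since $h_{44i}^2\geq0$ this gives \eqref{dk1}.

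First I would compute $dg$. Because the principal curvatures are simple on $Y$ they are smooth and satisfy $d\lambda_i=\sum_k h_{iik}\og_k$, so $dg=2(\lambda_2-\lambda_1)\sum_k(h_{22k}-h_{11k})\og_k$. Substituting the expressions for $h_{11k}$ and $h_{22k}$ from \eqref{tf1}, the explicit prefactor $\lambda_2-\lambda_1$ exactly cancels the $\lambda_2-\lambda_1$ in the denominators of $h_{11k}$ and $h_{22k}$, leaving $dg=b\sum_k h_{44k}\og_k$ with $b=2(\lambda_4-\lambda_3)\big(\tfrac{\lambda_4-\lambda_1}{\lambda_3-\lambda_2}+\tfrac{\lambda_4-\lambda_2}{\lambda_3-\lambda_1}\big)$. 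On $\{0<g<\delta_1\}$ Lemma \ref{prop4} gives $f\geq\e_0$, hence $\lambda_3-\lambda_2\geq\sqrt{\e_0}$ and $\lambda_3-\lambda_1\geq\sqrt{\e_0}$; combined with $|\lambda_i|\leq\sqrt S$ this yields $0<b\leq C(S,A_3,\delta_1)$.

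Next I would wedge with $\Phi=\sum_{i<j}\theta_{ij}$. Using \eqref{str3}, each $\theta_{ij}=\og_a\wedge\og_b\wedge\og_{ij}$ (with $\{a,b\}$ the complement of $\{i,j\}$) retains only the $\og_i$ and $\og_j$ parts of $\og_{ij}$, and wedging with $dg$ forces the one-form $\og_m$ supplied by $dg$ to carry the unique missing index. Consequently $dg\wedge\theta_{ij}$ produces exactly $b\,h_{44j}\tfrac{h_{iij}}{\lambda_i-\lambda_j}$ and $b\,h_{44i}\tfrac{h_{jji}}{\lambda_i-\lambda_j}$ times $\pm\text{vol}$, with no cross terms; after inserting \eqref{tf1} and using the full symmetry of $h_{ijk}$, these become pure multiples of $h_{44j}^2$ and $h_{44i}^2$. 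Summing over $i<j$ gives $dg\wedge\Phi=\big(\sum_i P_i\,h_{44i}^2\big)\text{vol}$ with explicit rational $P_i$.

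The heart of the argument is controlling the singular part of each $P_i$ as $\lambda_2\to\lambda_1$. I expect exactly two types of singular contributions. The double poles $\tfrac{1}{(\lambda_2-\lambda_1)^2}$ occur only in $P_1$ and $P_2$ and come solely from $\theta_{12}$; a direct sign check using $\lambda_4>\lambda_3>\lambda_2>\lambda_1$ and $b>0$ shows this contribution equals $-b[\cdots]<0$, so it only helps the upper bound and may be discarded. The simple poles $\tfrac{1}{\lambda_2-\lambda_1}$ appear in $P_3$ (from $\theta_{13}$ and $\theta_{23}$) and in $P_4$ (from $\theta_{14}$ and $\theta_{24}$); in each case the two residues assemble into a bracket that is \emph{antisymmetric} under the interchange $\lambda_1\leftrightarrow\lambda_2$, hence vanishes when $\lambda_1=\lambda_2$ and is therefore divisible by $\lambda_2-\lambda_1$, cancelling the pole and leaving a factor bounded by $C(S,A_3,\delta_1)$. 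Every remaining denominator is a power of $\lambda_3-\lambda_2$, $\lambda_3-\lambda_1$, $\lambda_4-\lambda_2$, $\lambda_4-\lambda_1$, or $\lambda_4-\lambda_3$, each bounded below on the region, so all other terms are manifestly bounded. Thus each $P_i\leq C$ and \eqref{dk1} follows. The main obstacle is purely the sign and antisymmetry bookkeeping in this last step: confirming that the $\theta_{12}$ double-pole term is genuinely negative and that the simple-pole residues in $P_3$ and $P_4$ truly pair up antisymmetrically under $\lambda_1\leftrightarrow\lambda_2$.
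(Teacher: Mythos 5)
Your proposal is correct and follows essentially the same route as the paper: write $dg=m_0\sum_k h_{44k}\og_k$ with $m_0$ positive and bounded on $\{0<g<\delta_1\}$ (using $f\geq\e_0$ from Lemma \ref{prop4}), expand $dg\w\Phi$ via \eqref{str3} and \eqref{tf1} into $\sum_i P_i h_{44i}^2\,\text{vol}$, and observe that the only unbounded coefficients are the negative double-pole terms in $P_1,P_2$ coming from $\theta_{12}$, while the $(\la_2-\la_1)^{-1}$ contributions to $P_3,P_4$ cancel. Your explicit antisymmetry argument for that last cancellation is a welcome detail that the paper subsumes under the bare assertion that the $G^g_i$ are uniformly bounded.
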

\begin{proof}
A direct calculation yields the following equalities.
\be\label{dk2}
\begin{aligned}
\og_1\w\Phi&=\og_1\w(\theta_{12}+\theta_{13}+\theta_{14})\\
&=\lt[-\frac{(\la_4-\la_3)(\la_4-\la_1)}{(\la_3-\la_2)(\la_2-\la_1)^2}+\frac{(\la_4-\la_2)(\la_4-\la_1)}{(\la_3-\la_2)(\la_3-\la_1)^2}
-\frac{1}{\la_4-\la_1}\rt]h_{441}\text{vol},
\end{aligned}
\ee

\be\label{dk3}
\begin{aligned}
\og_2\w\Phi&=\og_2\w(\theta_{12}+\theta_{23}+\theta_{24})\\
&=\lt[\frac{(\la_4-\la_2)(\la_4-\la_1)}{(\la_3-\la_1)(\la_3-\la_2)^2}-\frac{1}{\la_4-\la_2}-\frac{(\la_4-\la_3)(\la_4-\la_2)}{(\la_3-\la_1)(\la_2-\la_1)^2}\rt]
h_{442}\text{vol},
\end{aligned}
\ee

\be\label{dk4}
\begin{aligned}
\og_3\w\Phi&=\og_3\w(\theta_{13}+\theta_{23}+\theta_{34})\\
&=\lt[-\frac{1}{\la_4-\la_3}+\frac{(\la_4-\la_3)(\la_4-\la_1)}{(\la_3-\la_2)^2(\la_2-\la_1)}-\frac{(\la_4-\la_3)(\la_4-\la_2)}{(\la_3-\la_1)^2(\la_2-\la_1)}\rt]
h_{443}\text{vol},
\end{aligned}
\ee

\be\label{dk5}
\begin{aligned}
\og_4\w\Phi&=\og_4\w(\theta_{14}+\theta_{24}+\theta_{34})\\
&=\lt[-\frac{(\la_4-\la_3)(\la_4-\la_2)}{(\la_3-\la_1)(\la_2-\la_1)(\la_4-\la_1)}+\frac{(\la_4-\la_3)(\la_4-\la_1)}{(\la_2-\la_1)(\la_3-\la_2)(\la_4-\la_2)}\right.\\
&\left.-\frac{(\la_4-\la_2)(\la_4-\la_1)}{(\la_3-\la_1)(\la_3-\la_2)(\la_4-\la_3)}\rt]h_{444}\text{vol}.
\end{aligned}
\ee

Now, in view of equation \eqref{tf1} we have
\be\label{dk6}
\begin{aligned}
g_i&=2(\la_2-\la_1)(h_{22i}-h_{11i})\\
&=2(\la_4-\la_3)\lt(\frac{\la_4-\la_1}{\la_3-\la_2}+\frac{\la_4-\la_2}{\la_3-\la_1}\rt)h_{44i}
:=m_0h_{44i},
\end{aligned}
\ee
where $m_0>0$ is a uniformly bounded function on the set $\{p\in M^4: 0<g(p)<\delta_1\}.$
Therefore, it is easy to see that
\be\label{dk7}
\begin{aligned}
dg\w\Phi&=\sum\limits_{i=1}^4g_i\og_i\w\Phi\\
&=\lt(B_{1}^gh_{441}^2+B_{2}^gh^2_{442}+\sum\limits_{i=1}^4G_{i}^gh^2_{44i}\rt)\text{vol}.
\end{aligned}
\ee
Here, $B_{1}^g=-m_0\frac{(\la_4-\la_3)(\la_4-\la_1)}{(\la_3-\la_2)(\la_2-\la_1)^2}<0,$
$B^{g}_2=-m_0\frac{(\la_4-\la_3)(\la_4-\la_2)}{(\la_3-\la_1)(\la_2-\la_1)^2}<0,$ and $G^{g}_i,$ $1\leq i\leq 4,$ are uniformly bounded functions on the set $\{0<g<\delta_1\}.$
This completes the proof of this Lemma.
\end{proof}

Similarly, we consider the function $f=(\la_3-\la_2)^2.$ A direct calculation yields
\be\label{dk8}
\begin{aligned}
f_i&=2(\la_3-\la_2)(h_{33i}-h_{22i})\\
&=2(\la_4-\la_1)\lt(-\frac{\la_4-\la_2}{\la_3-\la_1}-\frac{\la_4-\la_3}{\la_2-\la_1}\rt)h_{44i}
:=m_1h_{44i},
\end{aligned}
\ee
where $m_1<0$ is a uniformly bounded function on the set $\{p\in M^4: 0<f(p)<\delta_1\}.$
In conjunction with equations \eqref{dk2}--\eqref{dk5} we conclude the following lemma.
\begin{lemma}
\label{dk-lem2}
Let $f=(\la_3-\la_2)^2,$ and $\delta_1>0$ be the constant chosen in Lemma \ref{prop4}. Then on the set $\{p\in M^4: 0<f(p)<\delta_1\},$ we have
\be\label{dk9}
df\w\Phi\leq C\lt(\sum\limits_{i=1}^4h_{44i}^2\rt)\text{vol}
\ee
for some $C=C(S, A_3, \delta_1)>0.$
\end{lemma}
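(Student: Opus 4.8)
The plan is to run the argument of Lemma~\ref{dk-lem1} with the coalescing pair shifted from $\{\la_1,\la_2\}$ to $\{\la_2,\la_3\}$. Each form $\og_i\w\Phi$ in \eqref{dk2}--\eqref{dk5} is a single scalar multiple of $h_{44i}\,\text{vol}$; writing $\og_i\w\Phi=c_i\,h_{44i}\,\text{vol}$ and inserting $f_i=m_1h_{44i}$ from \eqref{dk8}, I get
\[
df\w\Phi=\sum_{i=1}^4 f_i\,\og_i\w\Phi=m_1\sum_{i=1}^4 c_i\,h_{44i}^2\,\text{vol}.
\]
On $\{0<f<\delta_1\}$ Lemma~\ref{prop4} gives $g=(\la_2-\la_1)^2\geq\e_0$, so $\la_2-\la_1$ and $\la_3-\la_1$ are bounded below; together with $\la_4-\la_3\geq c_0>0$ on all of $M^4$ (the coincidence $\la_4=\la_3$ being excluded under the hypotheses of Theorem~\ref{thm3}), every gap except $\la_3-\la_2$ is bounded above and below there, and hence $m_1<0$ is uniformly bounded.

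Next I would isolate the part of each $c_i$ that blows up as $\la_3\to\la_2$. Inspecting \eqref{dk3} and \eqref{dk4}, the coefficients $c_2$ and $c_3$ each contain a single genuinely singular term, $\frac{(\la_4-\la_2)(\la_4-\la_1)}{(\la_3-\la_1)(\la_3-\la_2)^2}$ and $\frac{(\la_4-\la_3)(\la_4-\la_1)}{(\la_3-\la_2)^2(\la_2-\la_1)}$, both strictly positive on $Y$. Since $m_1<0$, the contributions $m_1c_2h_{442}^2$ and $m_1c_3h_{443}^2$ are $\leq0$ once this singular part is split off, so they may simply be dropped in an upper bound.

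It remains to show the surviving coefficients are bounded. The coefficients $c_1$ and $c_4$ from \eqref{dk2} and \eqref{dk5} carry only first-order poles $(\la_3-\la_2)^{-1}$, and these cancel: after factoring out $\frac{\la_4-\la_1}{\la_3-\la_2}$, the bracketed differences (for instance $\frac{\la_4-\la_2}{(\la_3-\la_1)^2}-\frac{\la_4-\la_3}{(\la_2-\la_1)^2}$ in $c_1$, and $\frac{\la_4-\la_3}{(\la_2-\la_1)(\la_4-\la_2)}-\frac{\la_4-\la_2}{(\la_3-\la_1)(\la_4-\la_3)}$ in $c_4$) vanish at $\la_3=\la_2$ and are smooth in $\la_3$ near $\la_2$, so they are $O(\la_3-\la_2)$ and the poles disappear. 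Thus $c_1,c_4$, the non-singular parts of $c_2,c_3$, and $m_1$ are all uniformly controlled, giving $df\w\Phi\leq C\lt(\sum_{i=1}^4 h_{44i}^2\rt)\text{vol}$ as claimed.

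The main obstacle is precisely the sign-and-cancellation bookkeeping just described: one must confirm that the $(\la_3-\la_2)^{-2}$ terms in $c_2,c_3$ are positive, so that pairing with $m_1<0$ yields the favorable sign, and that the apparent $(\la_3-\la_2)^{-1}$ singularities in $c_1,c_4$ genuinely cancel. This is the exact analog of verifying $B_1^g,B_2^g<0$ and the boundedness of the $G_i^g$ in Lemma~\ref{dk-lem1}, and the symmetry between the two lemmas makes it routine once the cancellation pattern is recognized.
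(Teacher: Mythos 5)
Your proposal is correct and follows essentially the same route as the paper: the paper likewise derives $f_i=m_1h_{44i}$ with $m_1<0$ uniformly bounded (using $g\geq\e_0$ on $\{0<f<\delta_1\}$) and combines this with \eqref{dk2}--\eqref{dk5}, discarding the negative coefficients of $h_{442}^2,h_{443}^2$ coming from the $(\la_3-\la_2)^{-2}$ poles and relying on the cancellation of the first-order poles in the coefficients of $h_{441}^2,h_{444}^2$. The sign and cancellation bookkeeping you spell out is exactly what the paper leaves implicit in its one-line ``in conjunction with equations \eqref{dk2}--\eqref{dk5}'' conclusion.
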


\subsection{Conclusion}
\label{con}
Recall that that $Y\neq\emptyset$ by Corollary \ref{cor-pre-2}.  For any $\e\in (0, \delta_1/2)$ we set
\[
\begin{aligned}
Y&=X_\e\cup Y_\e\cup Z_\e\\
&=\{p\in M^4: 0<g(p)<\e\}\cup\{p\in M^4: K(p)\geq\e\}\cup\{p\in M^4: 0<f(p)<\e\}.
\end{aligned}
\]

Following \cite{AB90}, for any smooth function $\eta: (0, \infty)\rightarrow \mathbb R$ with $\eta\equiv 0$ in a small neighborhood of $0$, we may apply Stokes's Theorem to
\[d((\eta\circ K)\Phi)=(\eta\circ K)d\Phi+(\eta'\circ K)dK\w\Phi\]
and obtain
\be\label{con1}
\int_Y(\eta\circ K)d\Phi+\int_Y(\eta'\circ K)dK\w\Phi=0.
\ee
Given a small $\e>0,$ we choose a smooth function $\eta=\eta_\e: \mathbb R\rightarrow \mathbb R$ such that\\
(a) $0\leq\eta_\e\leq 1,$\\
(b) $\eta_\e=0$ for $t\leq\frac{\e}{3},$\\
(c) $\eta_\e=1$ for $t\geq\e,$\\
(d) $0\leq\eta'_\e\leq O\lt(\frac{1}{\e}\rt)$ for $t\in (\frac{\e}{3}, \e)$
and $\eta'_\e\equiv 0$ at everywhere else.\\

In view of equations \eqref{form-derivative}, \eqref{con1}, and the assumption that $R_M\geq 0,$  we obtain
\be\label{con2}
\begin{aligned}
0&\geq\int_Y(\eta_\e\circ K)\lt(\sum\limits_{i=1}^4L_ih^2_{44i}-\frac{1}{2}R_M\rt)\text{vol}=-\int_Y(\eta_\e'\circ K)dK\w\Phi\\
&=-\int_{X_\e}(\eta'_\e\circ g)dg\w\Phi-\int_{Z_\e}(\eta'_\e\circ f)df\w\Phi\\
&\geq-\frac{C}{\e}\int_{X_\e\cup Z_\e}\sum\limits_{i=1}^4h^2_{44i}\text{vol}.
\end{aligned}
\ee
By Lemma \ref{dk-lem1}, Lemma \ref{dk-lem2} and the choice of $\eta_{\e}$, we know that $C>0$ is a constant independent of $\e.$
It is  well known that the $4$-dimensional minimal hypersurface in $\mathbb S^5$ with constant scalar curvature satisfies
\[\sum\limits_{i, j, k}h^2_{ijk}=S(S-4).\]
This yields that $|h_{ijk}|$ is bounded for any $1\leq i, j, k\leq 4.$ In view of \eqref{tf1} we have

\[|h_{11i}|=\lt|\frac{(\la_4-\la_3)(\la_4-\la_2)}{(\la_3-\la_1)(\la_2-\la_1)}h_{44i}\rt|\,\,\text{ is bounded on $X_\e,$}\]
and
\[ |h_{33i}|= \lt|\frac{(\la_4-\la_2)(\la_4-\la_1)}{(\la_3-\la_1)(\la_3-\la_2)}h_{44i}\rt|\,\,\text{ is bounded on $Z_\e.$}\]

This in turn implies that $|h_{44i}|\leq O(\sqrt{\e})$ on $X_\e\cup Z_\e.$ Equation \eqref{con2} then becomes
\be\label{con3}
\begin{aligned}
0&\geq\int_Y(\eta_\e\circ K)\lt(\sum\limits_{i=1}^4L_ih^2_{44i}-\frac{1}{2}R_M\rt)\text{vol}\\
&\geq-C\lt[\text{vol}(X_\e)+\text{vol}(Z_\e)\rt]
\end{aligned}
\ee
for some constant $C>0$ that is independent of $\e.$
It is easy to see that for any $\e_1>\e_2>0$ we have $X_{\e_2}\subset X_{\e_1}$ and $Z_{\e_2}\subset Z_{\e_1}.$
Moreover, we have $$\bigcap X_\e=\bigcap Z_\e=\emptyset.$$
Therefore, as $\e\rightarrow 0,$ we find $\text{vol}(X_\e)\rightarrow 0$ and $\text{vol}(Z_\e)\rightarrow 0.$ Together with \eqref{con3} this implies
\be\label{con4}\int_Y(\eta_\e\circ K)\lt(\sum\limits_{i=1}^4L_ih^2_{44i}-\frac{1}{2}R_M\rt)\text{vol}\rightarrow 0\,\,\mbox{as $\e\rightarrow 0.$}\ee

Recall that $\text{vol}(Y)>0$ by Corollary \ref{cor-pre-2} . Combining this with the fact that $L_i<0$ and $R_M\geq 0$ on $Y$ we conclude that on the set $Y,$
$R_M\equiv 0$ and $h_{44i}\equiv 0$ for $i\leq 4.$ Applying equation \eqref{tf1} again we obtain $h_{jji}=0$ on $Y$ for any $1\leq i, j\leq 4.$
This yields $\la_1, \la_2, \la_3, \la_4$ are constants on $Y.$ By continuity of $\la_i, 1\leq i\leq 4,$ we obtain $Y^c=\emptyset.$
Therefore, we conclude that $M^4$ is isoparametric and $R_M\equiv 0,$ that is $S\equiv 12.$ That is $M^4$ is a Cartan's minimal hypersurface.

\end{document}